\documentclass[11pt]{amsart}
\usepackage{amssymb, graphicx, color}
\usepackage{amsfonts}
\usepackage{amsthm}
\usepackage{enumerate}
\usepackage[mathscr]{eucal}
\usepackage{graphicx}

\usepackage[bookmarksnumbered, bookmarksopen, colorlinks, citecolor=blue, linkcolor=blue]{hyperref}

\allowdisplaybreaks

\newtheorem{thm}{Theorem}[section]
\newtheorem{defi}[thm]{Definition}
\newtheorem{cor}[thm]{Corollary}
\newtheorem{lem}[thm]{Lemma}

\newtheorem{prop}[thm]{Proposition}

\newtheorem{remark}[thm]{Remark}
\newtheorem{rk}[thm]{Remark}

\theoremstyle{definition}
\theoremstyle{remark}
\numberwithin{equation}{section}

\newcommand{\R}{{\mathbb R}}
\newcommand{\N}{{\mathbb N}}
\newcommand{\Z}{{\mathbb Z}}

\newcommand{\M}{{\mathcal M}}

\newcommand{\n}{{\mathcal N}}
\newcommand{\bs}{\begin{split}}
\newcommand{\es}{\end{split}}

\newcommand{\be}{\begin{eqnarray*}}
\newcommand{\ee}{\end{eqnarray*}}
\newcommand{\beq}{\begin{align}}
\newcommand{\eeq}{\end{align}}

\def\Q{\mathcal{Q}}
\def\1{\mathbf{1}}

\begin{document}

%
%
%
%
%
%
%
%
\setcounter{page}{1}
\title[Noncommutative maximal differential transforms]
{Noncommutative maximal differential transforms associated to averaging operators}

\author[S.  Liang]{Shaohong Liang}

\address{Department of Mathematics, Zhejiang University of Science and Technology, Hangzhou 310023, China}
\email{liangshaohongzk@163.com}

\author[Y. Wang]{Yu Wang}

\address{School of Mathematics and Big data, Jining University, Qufu 273155, China}

\email{202410110@jnxy.edu.cn}

\author[B. Xu]{Bang Xu}

\address{School of Mathematical Sciences, Xiamen University,  Xiamen 361005, China}

\email{bangxu@xmu.edu.cn}

\author[C. Zhang]{Chao Zhang$^\dag$}

\address{Department of Mathematics, Zhejiang University of Science and Technology, Hangzhou 310023, China}

\email{zaoyangzhangchao@163.com}

\thanks{{\it 2020 Mathematics Subject Classification:}  Primary 46L52; Secondary 42B20, 46L53.}
\keywords{Calder{\'o}n-Zygmund decomposition,  Noncommutative $L_{p}$-spaces, Differential transforms, Noncommutative martingales, Cotlar's inequalities}

\thanks{B. Xu was supported by National Natural Science Foundation of China (Grant No.
12501172), and Fundamental Research Funds for the Central Universities (Grant No.
20720250061). C. Zhang was supported  by the National Natural Science Foundation of China (Grant No.  11971431).}

\thanks{$^\dag$ Corresponding Author.  }

\begin{abstract}
In this paper, we establish the noncommutative maximal weak type $(1,1)$ and strong type $(p,p)$ estimates for the family of operators $(T_N)_N$, defined   by
$$T_Nf=\sum_{k=N_1}^{N_2}\nu_{k}(M_{k}-\mathsf{E}_k)f,$$
where $M_k$ denotes the dyadic Hardy--Littlewood average operator, $\mathsf{E}_{k}$ is the conditional expectation with respect to the dyadic cubes of side-length $2^{-k}$, $N=(N_1,N_2)$ with $N_1<N_2$ and $(\nu_{k})\in\ell_{\infty}$. The main novelty of our approach is the development of a noncommutative Cotlar-type inequality for non-smooth kernels, a result that is new even in  classical harmonic analysis.
As an application, we obtain the boundedness theory of the noncommutative maximal  differential transforms for averaging operators.
\end{abstract}

\maketitle

\section{Introduction}
The study of differential transforms and their boundedness properties has a long history in classical analysis, particularly in probability theory, harmonic analysis and ergodic theory. Originally,  Burkholder~\cite{Bur66} considered the martingale transforms in martingale theory in 1966. Let $(\Omega, \mathcal F, P)$  be  a probability space , and $\nu_n$ be a $\mathcal F_n$-measurable function with $\mathcal F_0\subset \mathcal F_1\subset \cdots\subset \mathcal F$ being $\sigma$-fields such that $\{f_n, \mathcal F_n \}_{n\ge 1}$ is a martingale.  Burkholder proved that  the following transform
$$g_n=\sum_{k=1}^{n}v_k (f_k-f_{k-1})$$
construct a new martingale when the maximal function  $\nu^*(\omega)=\sup_n |\nu_n(\omega)|$ is finite.
More generally, let $(T_k)$ be a sequence of operators  and $(a_k)$ be an increasing sequence of positive real numbers. For a reasonable function $f$, the differential transform series is defined as
$$\sum_{k\in\Z}\nu_k(T_{a_{k+1}}-T_{a_k})f,$$
where $(\nu_{k})\in\ell_{\infty}$.
In \cite{JR02}, Jones and Rosenblatt  studied the $L_p$-boundedness  for such series in dynamical system. This program was subsequently extended by the last author and his collaborators \cite{ZMT21,ZT2021} associated with the Poisson and heat semigroups in harmonic analysis. Further developments, together with an extensive bibliography, can be found in \cite{LWZ23,Zhang2024} and the references therein.

The differential transform associated to averaging operators is defined for a reasonable function $f$ on $\R^d$ by
$$	Df=\sum_{k\in\Z}\nu_{k}(M_k -M_{k-1})f,$$
where $M_k$ denotes the dyadic Hardy-Littlewood averaging operator
$$M_k f(x)= \frac{1}{|B_k|} \int_{B_k}f(x+y)dy= \frac{1}{|B_k|} \int_{\R^d}f(y) \1_{B_k}(x-y)~dy \quad x\in\R^d,$$
and $B_k$ is the open ball centered at the origin with radius $r(B_k)=2^{-k}$.
A closely related operator is the difference between averaging operators and dyadic conditional expectations
\begin{align}\label{xu-zhang2026}
	Tf=\sum_{k\in\Z}\nu_{k}(M_{k}-\mathsf{E}_k)f,
\end{align}
where $\mathsf{E}_{k}$ denotes the $k$-th conditional expectation with respect to the $\sigma$-algebra  generated by the standard dyadic cubes of side length $2^{-k}$.
A significant observation is that the $L_p$-boundedness of $T$ immediately implies that of $D$ (see e.g. \cite{JR02}). Furthermore, it is noteworthy that the square function form of $T$ obtained by replacing $(\nu_k)$ with the Rademacher sequence $(\varepsilon_k)$ plays  an essential role in establishing variational inequalities for ergodic averages. For further background on this topic, the reader may refer to, for instance, \cite{Bou89, HM1, JKRW98, JRW03, JSW08, LeXu2} and references therein.

Over the past two decades, noncommutative analysis has developed a parallel theory of harmonic analysis, ergodic theory and martingale inequalities in the setting of von Neumann algebras.
Noncommutative $L_p$-spaces introduced by Segal, Dixmier, and others, provide a natural framework for extending classical results to operator-valued functions. Significant progress has been made in noncommutative Calder\'on-Zygmund theory \cite{C,CCP,HLX,JP1}, noncommutative ergodic theory \cite{HLW,JX}, noncommutative martingale transforms \cite{JSZ,Ran02}, among other areas.

In this spirit, the third author \cite{Xu2023} extended the uniform $L_p$-boundedness theory of
$T$ defined in (\ref{xu-zhang2026}) to the operator-valued setting, and then obtained  corresponding results for the differential transforms $D$ in conjunction with martingale transforms \cite{Ran02}. The main difficulty in the proof lies in establishing the weak type $(1,1)$ estimate of $T$, since the kernel associated with $T$ does not enjoy any regularity. This stands in contrast to methods used in \cite{C,MP,JP1}, which depend crucially on Lipschitz regularity condition (see also \cite{CCP,HLX} for cases with weaker regularity assumptions). We need to emphasize that the $L_p$-estimates of operators mentioned above, which concern infinite summations over $k\in\Z$, should be interpreted as a consequence of  the corresponding uniform boundedness for all finite summations through a standard approximation argument (see e.g. \cite[Section 6.A]{JMX}).

Nevertheless, the study of the boundedness theory of maximal differential transforms in the noncommutative setting remains largely unexplored. Although both weak type $(1,1)$ and strong type $(p,p)$ estimates are known for noncommutative maximal martingale transforms \cite{JSZ} and certain square functions \cite{HLX2024,HX}, the corresponding boundedness theory for the maximal differential operators has still not been established.
This paper is devoted to study  the maximal weak type $(1,1)$ as well as strong type $(p,p)$ estimates of the families $(T_N)_N$ and $(D_N)_N$ in the operator-valued setting.  To better state our results, we first introduce some necessary notations; more precise definitions are deferred to Section \ref{Sec:Pre}.
Let $(\mathcal{M}, \tau)$ be a noncommutative measure space, where $\mathcal{M}$ denotes a semifinite von Neumann algebra equipped with a normal semifinite faithful trace $\tau$. Define $\mathcal{N} = L_{\infty}(\mathbb{R}^{d}) \overline{\otimes} \mathcal{M}$ as the tensor von Neumann algebra endowed with the tensor trace
$\varphi = \int dx \otimes \tau,$
where $dx$ denotes the Lebesgue measure. For $0 < p \leq \infty$, we denote by $L_p(\mathcal{M})$ and $L_p(\mathcal{N})$ the noncommutative $L_p$-spaces associated with the pairs $(\mathcal{M}, \tau)$ and $(\mathcal{N}, \varphi)$, respectively.
Note that for $0 < p < \infty$, the space $L_p(\mathcal{N})$ can be identified with the Bochner $L_p$-space
$
L_{p}\bigl(\mathbb{R}^{d}; L_p(\mathcal{M})\bigr).
$

Let $\Z^2_{<}=\{N=(N_1,N_2):N_1<N_2\}$. For every $N=(N_1,N_2)\in\Z^2_{<}$, define
\begin{align}\label{XZmain1}
	T_Nf=\sum_{k=N_1}^{N_2}\nu_{k}(M_{k}-\mathsf{E}_k)f,
\end{align}
whenever $(\nu_{k})\in\ell_{\infty}$ and $f\in C_{c}^{\infty}(\R^{d})\otimes S_{\M}$, where $S_{\M}$ denotes the subset of $\M$ with $\tau$-finite support.
Our first result is stated as follows. We refer  to next section for the definition of noncommutative (weak) maximal norms and b.a.u. (an abbreviation for bilaterally almost uniformly) convergence.
\begin{thm}\label{maintheorem}
	Let $T_{N}$ be defined as (\ref{XZmain1}) with an arbitrary sequence $\nu=(\nu_{k})$ satisfying $\|\nu\|_{\ell_{\infty}}\leq1$. The following conclusions hold.	
\begin{itemize}
	\item [(i)] For $1<p<\infty$, $(T_{N}f)_{N\in\Z^2_{<}}$ is of strong type $(p,p)$. More precisely, there exists a positive constant $C_{d,p}$ depends only on dimension $d$ and $p$ such that for all $f\in L_p(\n)$,
	$$\|{\sup_{N\in\Z^2_{<}}}^+T_{N}f\|_p\leq C_{d,p}\|f\|_p.$$

	\item [(ii)] $(T_{N}f)_{N\in\Z^2_{<}}$ is of weak type $(1,1)$. More precisely, there exists a positive constant $C_{d}$ depends only on dimension $d$ such that for any $f\in L_1(\n)$ and $\lambda>0$, there is a projection $e\in\n$  satisfying
	$$\forall N\in\Z^2_{<},\ \|eT_Nfe\|_\infty\leq\lambda\ \text{and}\ \varphi(1-e)\leq C_d\frac{\|f\|_1}{\lambda}.$$
	
	\item [(iii)] For any $f\in L_p(\n)$ with $1\leq p<\infty$,
	$$T_Nf\ \text{ converges}\  \text{b.a.u.}\ \text{as}\ N_1\rightarrow-\infty,N_2\rightarrow\infty.$$
\end{itemize}	
\end{thm}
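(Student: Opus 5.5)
The plan is to reduce the two-sided maximal operator over $N=(N_1,N_2)$ to a one-sided one, and then control that by a Cotlar-type pointwise (operator) inequality. Set $S_mf=\sum_{k\ge m}\nu_k(M_k-\mathsf{E}_k)f$, interpreted through finite truncations in the usual way. Then $T_Nf=S_{N_1}f-S_{N_2+1}f$. Hence both the strong type $(p,p)$ bound for ${\sup}^+_N$ and the weak type $(1,1)$ projection statement follow once we prove them for the one-parameter family $(S_mf)_{m\in\Z}$, uniformly over finite truncations $\sum_{k=m}^{L}$ with $L$ fixed and then $L\to\infty$. Indeed, given projections $e_1,e_2$ for $S_{N_1}$ and $S_{N_2+1}$, we take $e=e_1\wedge e_2$, which satisfies $\varphi(1-e)\le\varphi(1-e_1)+\varphi(1-e_2)$.

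\textbf{Step 1: $L_p$ and weak $L_1$ bounds for the full operator.} For $T=\sum_k\nu_k(M_k-\mathsf{E}_k)$, uniform in finite truncations and in $\|\nu\|_{\ell_\infty}\le1$, we invoke \cite{Xu2023}.

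\textbf{Step 2: the noncommutative Cotlar inequality.} Fix $x$ and $m$, and let $Q$ be the dyadic cube of side $2^{-m}$ containing $x$. Split $f=f\1_{3Q}+f\1_{(3Q)^c}$ (a fixed enlargement depending on $d$). The terms with $k\ge m$ have $M_k$ supported at scale $\le2^{-m}$ and $\mathsf{E}_k$ localized inside $Q$. So $S_mf(x)$ differs from $T(f\1_{3Q})$, averaged suitably over $Q$, only by the following:
\begin{itemize}
\item a boundary piece $\sum_{k<m}\nu_k(M_k-\mathsf{E}_k)(f\1_{3Q})$, which is controlled by averages of $|f|$ over dilates of $Q$. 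Here we use that $\mathsf{E}_k$ for $k<m$ is constant on $Q$, and that the averages $M_k$ for $k<m$ sum via the geometric decay of $|B_k\triangle(B_k+h)|/|B_k|$ only in an averaged-in-$x$ sense;
\item the far part, which vanishes for $k\ge m$.
\end{itemize}
Because the kernel is non-smooth, the classical pointwise Cotlar argument fails. Instead of comparing values at $x$ and at $y\in Q$ pointwise, I would average over $y\in Q$ and use the $L_1$-smallness $\int|\1_{B_k}(x-z)-\1_{B_k}(y-z)|\,dz\lesssim 2^{k-m}|B_k|$ for $|x-y|\le 2^{-m}$.

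In operator form, the target is: for positive $f$ (reducing by decomposing $f$ into four positive parts), there exist bounded operators with
$$S_mf=\mathsf{E}_m\big(T f\big)+A_m f+\mathsf{E}_m(\text{error}),\qquad \pm A_mf\lesssim \mathcal{M}_d f,$$
where the comparison with a dyadic-type maximal averaging family can be handled by the noncommutative Doob inequality (Junge) and its weak $(1,1)$ version (Cuculescu). The maximal control of $(\mathsf{E}_m(Tf))_m$ then follows from Doob applied to $Tf$ in $L_p$, using Step 1. The terms $\sum_k\nu_k(M_k-\mathsf{E}_k)$ that straddle scales and cannot be dominated pointwise I would handle through a square-function/almost-orthogonality estimate, giving $\|\cdot\|_{L_p(\ell_\infty)}$ bounds.

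\textbf{Step 3: the weak type $(1,1)$ bound.} The Cotlar route is unavailable for $p=1$, since $\mathsf{E}_m(Tf)$ with $Tf$ only weak-$L_1$ is not Doob-controllable. So I would run a noncommutative Calderón–Zygmund decomposition (Parcet's / Cadilhac's form) at level $\lambda$, writing $f=g+b_{\rm diag}+b_{\rm off}$ with the Cuculescu projections $q_k$ and the exceptional projection $\zeta$, where $\varphi(1-\zeta)\lesssim\|f\|_1/\lambda$.
\begin{itemize}
\item The good part $g$ is handled by part (i) with $p=2$.
\item For $b_{\rm diag}=\sum_j (\mathsf{E}_j-\dots)$-type pieces, the key observation is that $(M_k-\mathsf{E}_k)b_j$ is supported near the boundary of the cubes of scale $2^{-j}$ when $k\ge j$. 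It is therefore killed by $\zeta$ on a dilated set except for a boundary layer whose measure decays like $2^{j-k}$. This decay is summable, giving an $L_1$ bound for $\sum_k|\dots|$ off $\zeta$, and hence a maximal bound trivially.
\item The off-diagonal terms are handled by a pseudo-localization or $L_2$ argument as in \cite{Xu2023}, now with a supremum in $m$. Domination by $\sum_k|\cdot|$ is again possible only partially, so I expect this to require the Cotlar inequality in an $L_2$-maximal form applied to $b_{\rm off}$.
\end{itemize}

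\textbf{Step 4: b.a.u. convergence.} Convergence holds trivially on the dense class $C_c^\infty\otimes S_\M$, since the tails decay: for smooth $f$, $\|(M_k-\mathsf{E}_k)f\|$ is $O(2^{-k})$ for large $k$ and small for very negative $k$. The maximal inequalities of (i) and (ii) then transfer b.a.u. convergence to all of $L_p$ by the standard noncommutative Banach principle.

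\textbf{Main obstacle.} I expect the main obstacle to be Step 2: formulating a Cotlar inequality without pointwise values or kernel smoothness, where operators cannot be compared by absolute values and everything must be phrased as operator inequalities between positive parts.
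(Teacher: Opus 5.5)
\textbf{Step 2 does not prove (i).} Your outer reductions agree with the paper: one-sided partial sums, Doob's inequality for $\mathsf E_m(Tf)$ combined with the $L_p$ bound for $T$, a Calder\'on--Zygmund decomposition at $p=1$, and density plus maximal inequalities for b.a.u.\ convergence. The problem is the mechanism you propose for the Cotlar step. Writing the partial sums in terms of $\mathsf E_m(Tf)$ leaves, up to signs and an index shift, the remainders $\sum_{j\ge0}\nu_{m-j}(I-\mathsf E_m)M_{m-j}f$ and $\sum_{j\ge1}\nu_{m+j}\mathsf E_m(M_{m+j}-I)f$. For $f\ge0$ the $j$-th term is dominated in operator order by an average of $f$ over a thin set $\Gamma$ of relative measure $\lesssim2^{-j}$. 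For the first remainder $\Gamma$ is the annulus $\{y:\ \bigl|\,|y-x|-2^{j-m}\bigr|\le\sqrt d\,2^{-m}\}$, normalized by $|B_{m-j}|$. For the second it is the $2^{-m-j}$-neighbourhood of $\partial Q_m(x)$, normalized by $|Q_m(x)|$. Bounding such an average by an average of $f$ over a ball or cube forgets that $\Gamma$ is small. Each term is then only $\lesssim Mf(x)$, and the series in $j$ need not converge.

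This loss is real. Take $\nu\equiv1$ and place bumps of mass $\approx|B_{m-Lj}|$, $1\le j\le J$ ($L$ a fixed large integer), just inside the spheres $\partial B(x,2^{Lj-m})$ on the side away from $Q_m(x)$. Then $\sum_{k<m}(I-\mathsf E_m)M_kf(x)\gtrsim J$, while every ball or dyadic average of $f$ over a set containing $x$ is $\lesssim1$. So $\pm A_mf\lesssim\mathcal M_df$ cannot hold, and the square-function fallback you mention for exactly these terms is never specified.

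The missing idea is the paper's: apply the operator-valued H\"older inequality with an exponent $1<s<p$ to each thin-set average. This turns the small relative measure into genuine decay,
\[
\frac{1}{|B|}\int_{\Gamma}f\,dy\le\Bigl(\frac{|\Gamma|}{|B|}\Bigr)^{1/s'}\Bigl(\frac{1}{|B|}\int_\Gamma f^s\,dy\Bigr)^{1/s}\lesssim 2^{-j/s'}\bigl(M_{k}(f^s)(x)\bigr)^{1/s}
\]
for a suitable $k=k(m,j)$. The series in $j$ then sums. Finally, $\|{\sup_k}^+(M_kf^s)^{1/s}\|_p\lesssim\|f\|_p$ follows from the noncommutative Hardy--Littlewood maximal inequality applied to $f^s\in L_{p/s}(\mathcal N)$, together with operator monotonicity of $t\mapsto t^{1/s}$.

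\textbf{Step 3 has two further gaps.} In the diagonal part the two ranges of scales are swapped. For $k\ge n$ (radius $\le2^{-n}$) one has $\zeta(M_k-\mathsf E_k)b_n\zeta=0$ identically, because $x+B_k\subset5Q_{x,n}$ and $Q_{x,k}\subset Q_{x,n}$; no boundary layer survives there. The terms that do survive are those with $k<n$, which $\zeta$ does not touch and which you never treat. For them $\mathsf E_kb_n=\mathsf E_k\mathsf E_nb_n=0$. The cancellation $\int_Qb_n=0$ for $Q\in\mathcal Q_n$ means $M_kb_n(x)$ only sees the cubes meeting the sphere $x+\partial B_k$, so $\|M_kb_n\|_1\lesssim2^{k-n}\|b_n\|_1$. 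Hence $\zeta T_Nb\zeta$ is dominated, uniformly in $N$, by $\zeta F_1\zeta$ with $F_1=\sum_n\sum_{k<n}|\nu_kM_kb_n|$ and $\|F_1\|_1\lesssim\|f\|_1$.

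For the off-diagonal part you only state an expectation (maximal pseudo-localization, or an $L_2$-maximal Cotlar inequality). Pseudo-localization relies on kernel regularity that $T$ lacks, so this is not a workable plan as it stands. It is also unnecessary, because the off-diagonal terms go through exactly like $b$. Cuculescu's commutation property kills $p_nf_nq_n$, so $h_n=p_nfq_n+q_nfp_n$. For $Q\in\mathcal Q_n$, the noncommutative Cauchy--Schwarz inequality together with $q_Qf_Qq_Q\le\lambda q_Q$ gives
\[
\Bigl\|\int_{(x+B_k)\cap Q}p_Qf(y)q_Q\,dy\Bigr\|_{L_1(\mathcal M)}\le\bigl[\varphi(fp_Q\chi_Q)\,\tau(p_Q)\,\lambda|Q|\bigr]^{1/2}.
\]
The same sphere-counting factor $2^{k-n}$, followed by Cauchy--Schwarz over $Q\in\mathcal Q_n$ and then over $n$, bounds the corresponding dominating function $F_2$ in $L_1$ by $(\lambda\varphi(1-q))^{1/2}\varphi(f(1-q))^{1/2}\lesssim\|f\|_1$. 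A Chebyshev projection for $\zeta F_2\zeta$ then finishes (ii).

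Your Step 4 is sound, given (i) and (ii). Using $C_c^\infty\otimes S_{\mathcal M}$ is the right dense class: for $h=a\otimes\chi_Q$ a dyadic step function, $(M_k-\mathsf E_k)h$ stays of size $\approx\|a\|/2$ near the faces of $Q$ for all large $k$. The $O(2^{-k})$ sup-norm tail decay therefore genuinely needs smoothness.
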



An immediate difficulty in showing Theorem \ref{maintheorem} is that the classical maximal function $\sup_{N}|T_Nf|$ is not well-defined in general whenever $(T_Nf)_N$ is a sequence of operators.  Indeed, the right formulation of noncommutative maximal (resp. weak) $L_p$-inequalities is accomplished through the introduction of vector-valued noncommutative (resp. weak) $L_p$-spaces $L_p(\M;\ell_\infty)$ (resp. $\Lambda_{p,\infty}(\M;\ell_\infty)$). Precise definitions are provided in the subsequent section.

For every $N=(N_1,N_2)\in\Z^2_{<}$, set
\begin{align}\label{1211}
D_Nf=\sum_{k=N_1}^{N_2}\nu_{k}(M_k -M_{k-1})f.
\end{align}
Then together with the boundedness theory  of noncommutative maximal martingale transforms proved in \cite[Theorem 1.2]{JSZ}, Theorem \ref{maintheorem} immediately implies the following result.
\begin{cor}\label{t7}
Let $D_N$ be defined as (\ref{1211}) with $\|\nu\|_{\ell_{\infty}}\leq1$. Then the following statements hold.	
\begin{itemize}
	\item [(i)] For $1<p<\infty$, $(D_{N}f)_{N\in\Z^2_{<}}$ is of strong type $(p,p)$.

	\item [(ii)] $(D_{N}f)_{N\in\Z^2_{<}}$ is of weak type $(1,1)$.
	
	\item [(iii)] For any $f\in L_p(\n)$ with $1\leq p<\infty$,
	$$D_Nf\ \text{ converges}\  \text{b.a.u.}\ \text{as}\ N_1\rightarrow-\infty,N_2\rightarrow\infty.$$
\end{itemize}	
\end{cor}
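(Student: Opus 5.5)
The plan is to split $D_N$ into a piece controlled by Theorem \ref{maintheorem} and a martingale transform. Writing $M_k-M_{k-1}=(M_k-\mathsf{E}_k)-(M_{k-1}-\mathsf{E}_{k-1})+(\mathsf{E}_k-\mathsf{E}_{k-1})$, we get for $N=(N_1,N_2)\in\Z^2_{<}$
\begin{align*}
D_Nf=\sum_{k=N_1}^{N_2}\nu_k(M_k-\mathsf{E}_k)f-\sum_{k=N_1-1}^{N_2-1}\nu_{k+1}(M_k-\mathsf{E}_k)f+\sum_{k=N_1}^{N_2}\nu_k(\mathsf{E}_k-\mathsf{E}_{k-1})f .
\end{align*}
The first sum is $T_Nf$ with the sequence $\nu$. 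The second is $T_{N'}f$ with $N'=(N_1-1,N_2-1)\in\Z^2_{<}$ and the shifted sequence $\nu'_k=\nu_{k+1}$, which still satisfies $\|\nu'\|_{\ell_\infty}\le1$. The third is a martingale transform $S_Nf=\sum_{k=N_1}^{N_2}\nu_k d_kf$ with predictable (indeed constant) bounded multipliers, where $d_kf=\mathsf{E}_kf-\mathsf{E}_{k-1}f$ are the dyadic martingale differences of $f$ in $\n$ with respect to the filtration $(L_\infty(\R^d,\sigma_k)\overline{\otimes}\M)_k$.

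For (i), the map $N\mapsto N'$ is a bijection of $\Z^2_{<}$, so $\|{\sup_N}^+T_{N'}f\|_p=\|{\sup_N}^+T_Nf\|_p$ (with $\nu'$) is bounded by $C_{d,p}\|f\|_p$ by Theorem \ref{maintheorem}(i). The maximal estimate for $(S_Nf)_N$ in $L_p(\n;\ell_\infty)$ is \cite[Theorem 1.2]{JSZ}; there the transforms are indexed by partial sums $\sum_{k\le n}$, and the two-sided truncation $S_N$ is the difference of two such partial sums at $N_2$ and $N_1-1$. That can be handled by the quasi-triangle inequality in $L_p(\n;\ell_\infty)$ (a genuine norm for $p\ge1$), or by applying the result to a doubly indexed family. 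Summing the three pieces in $L_p(\n;\ell_\infty)$ gives (i).

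For (ii), we work with projections. For $\lambda>0$ we apply Theorem \ref{maintheorem}(ii) at level $\lambda/3$ to the families with $\nu$ and with $\nu'$, and the weak type $(1,1)$ of maximal martingale transforms from \cite{JSZ} (again splitting the two-sided truncation into two one-sided ones at level $\lambda/6$ each). This gives projections $e_1,\dots,e_4$. Let $e=\bigwedge e_i$. Then $\varphi(1-e)\le\sum\varphi(1-e_i)\lesssim\|f\|_1/\lambda$, and $\|eD_Nfe\|_\infty\le\sum_i\|e(\cdot)e\|_\infty\le\lambda$ because $e\le e_i$ gives $\|exe\|\le\|e_ixe_i\|$. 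The equivalence between this projection formulation and membership in $\Lambda_{1,\infty}(\n;\ell_\infty)$ is standard. For (iii), we combine b.a.u.\ convergence of the two $T$-pieces from Theorem \ref{maintheorem}(iii) (the $\nu'$-family converges as $N'$ does) with b.a.u.\ convergence of martingale transforms for $f\in L_p$ from \cite{JSZ}. Sums of b.a.u.\ convergent sequences converge b.a.u., using intersections of the corresponding projections as above.

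The only delicate point is matching the indexing and formulation of \cite[Theorem 1.2]{JSZ}, namely two-sided truncations over $\Z$ and b.a.u.\ convergence as $N_1\to-\infty$. I would handle it with the decomposition into differences of one-sided partial sums, plus a density argument on $C_c^\infty(\R^d)\otimes S_\M$ (where $\mathsf{E}_kf\to0$ as $k\to-\infty$) together with the maximal inequality.
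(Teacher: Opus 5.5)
Your proposal is correct and follows the paper's own argument. You use the same splitting of $M_k-M_{k-1}$ into $(M_k-\mathsf{E}_k)$, the martingale difference $(\mathsf{E}_k-\mathsf{E}_{k-1})$ and $-(M_{k-1}-\mathsf{E}_{k-1})$, with the last sum re-indexed as $-T_{(N_1-1,N_2-1)}f$ for the shifted sequence $(\nu_{k+1})$. You then apply Theorem~\ref{maintheorem} to the two $T$-pieces and \cite[Theorem 1.2]{JSZ} to the martingale transform, combine them by the (quasi-)triangle inequality, and get b.a.u.\ convergence as in Theorem~\ref{maintheorem}(iii).

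Your extra remarks make explicit details the paper leaves implicit: writing two-sided truncations as differences of one-sided partial sums, and handling the limit $N_1\to-\infty$ by density. The only slip is harmless: the second piece is $-T_{N'}f$, not $T_{N'}f$, which your displayed formula already shows.
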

\begin{remark}
Theorem \ref{maintheorem} extends \cite[Theorem 3]{JR02} to the noncommutative setting.  Along with Corollary \ref{t7} and the transference techniques from \cite{HLW,HLX}, it also suggests further applications in other related topics such as noncommutative ergodic theory. However, we shall not pursue these directions in the present paper, as they lie beyond the main scope of our current work.
\end{remark}

With Theorem \ref{maintheorem} in hand, we now can present the proof of Corollary \ref{t7}. For each $N\in\Z^2_{<}$, we decompose
\begin{align*}
	D_Nf& =\sum_{k=N_1}^{N_2}\nu_{k}\Big((M_{k}-\mathsf{E}_{k})f+(\mathsf{E}_{k}-\mathsf{E}_{k-1})f+(\mathsf{E}_{k-1}-M_{k-1})f\Big)\\
	& =\sum_{k=N_1}^{N_2}\nu_{k}(M_{k}-\mathsf{E}_{k})f+\sum_{k=N_1}^{N_2}\nu_{k}(\mathsf{E}_{k}-\mathsf{E}_{k-1})f
	+\sum_{k=N_1}^{N_2}\nu_{k}(\mathsf{E}_{k-1}-M_{k-1})f.
\end{align*}
Note that the maximal weak type $(1,1)$ and strong type $(p,p)$ boundedness of the first and the third terms in the final expression above are a consequence of Theorem \ref{maintheorem} (for the third term, re-index $j=k-1$ to obtain $-\sum_{j=N_1-1}^{N_2-1}\nu_{j+1}(M_j-\mathsf{E}_j)f$, which is of the form $T_{(N_1-1,N_2-1)}f$ with the coefficient sequence $(\nu_{j+1})$ still satisfying $\|(\nu_{j+1})\|_{\ell_\infty}\le1$), while the middle term we refer to \cite[Theorem 1.2]{JSZ}. Thus Corollary \ref{t7} (i) and (ii) can be  proved by using the triangle (resp. quasi-triangle) inequality in $L_{p}(\n;\ell_\infty)$ (resp.  $\Lambda_{1,\infty}(\n;\ell_\infty)$). Finally, the b.a.u.\ convergence result can be obtained in a manner similar to that of Theorem \ref{maintheorem} (iii).

\medskip

Let us briefly outline the proof of Theorem \ref{maintheorem}. The pointwise convergence result-Theorem \ref{maintheorem} (iii) can
be obtained as a byproduct of the maximal  inequalities through a
standard verification. For
the sake of completeness, we shall present a proof
in Section \ref{Sec:convergence}. The deduction of (ii) from (i) relies on the noncommutative Calder\'on-Zygmund decomposition developed in \cite{CCP}. Hence, our main effort is devoted to establishing the strong type $(p,p)$ estimates of $(T_N)_N$, which in turn follow from the noncommutative Cotlar-type inequality stated below.
\begin{prop}\label{cotlar1}
	Let $T$ be defined in (\ref{xu-zhang2026}) and $T_{N}$ be defined as (\ref{XZmain1}) with $\|\nu\|_{\ell_{\infty}}\leq1$. Then for any $1<s<\infty$, $1<p<\infty$ and $f\in L_p(\n)$,
	\begin{align}\label{cotlar2}
\|{\sup_{N\in\Z^2_{<}}}^+T_{N}f\|_p\leq C_{d,s}\Big(\|{\sup_{n\in\Z}}^+\mathsf{E}_{n}(Tf)\|_{p}+\|{\sup_{k\in\Z}}^+(M_k f^s)^{\frac{1}{s}})\|_{p}\Big).
	\end{align}
\end{prop}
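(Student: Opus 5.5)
The plan is to imitate the classical Cotlar argument, in which the truncated operator is compared pointwise with an averaged version of the full operator. Here the averaging is done by the dyadic conditional expectations, and all pointwise inequalities are replaced by operator inequalities that are stable under the noncommutative maximal norm.

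\textbf{Step 1: reduce to one-sided truncations.} Since $T_N=T_{(N_1,\infty)}-T_{(N_2+1,\infty)}$, where $T_{(n,\infty)}=\sum_{k\ge n}\nu_k(M_k-\mathsf{E}_k)$ is understood as a limit, the triangle inequality in $L_p(\n;\ell_\infty)$ reduces the claim to bounding $\|{\sup_n}^+ T_{(n,\infty)}f\|_p$. Similarly $T_{(n,\infty)}=T-T_{(-\infty,n-1)}$, and $\|Tf\|_p\le\|{\sup_n}^+\mathsf{E}_n(Tf)\|_p$ by martingale convergence. So it suffices to control the family $S_nf:=\sum_{k<n}\nu_k(M_k-\mathsf{E}_k)f$.

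\textbf{Step 2: the key decomposition.} I will write
\[
S_nf=\mathsf{E}_n(S_nf)+(\mathrm{id}-\mathsf{E}_n)S_nf,
\qquad
\mathsf{E}_n(S_nf)=\mathsf{E}_n(Tf)-\mathsf{E}_n\Big(\sum_{k\ge n}\nu_k(M_k-\mathsf{E}_k)f\Big).
\]
The term $\mathsf{E}_n(Tf)$ gives exactly the first term on the right of \eqref{cotlar2}. It then remains to estimate two error families.
\begin{itemize}
\item[(a)] $A_n=\mathsf{E}_n\sum_{k\ge n}\nu_k(M_k-\mathsf{E}_k)f$. Here $\mathsf{E}_n\mathsf{E}_k=\mathsf{E}_n$ for $k\ge n$, and $\mathsf{E}_nM_k$ differs from $\mathsf{E}_n$ only near the boundaries of cubes in $\mathcal D_n$. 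So $\mathsf{E}_n(M_k-\mathsf{E}_k)f$ is an average of $f$ over a layer of width about $2^{-k}$ around $\partial Q$, with $Q\in\mathcal D_n$.
\item[(b)] $B_n=(\mathrm{id}-\mathsf{E}_n)\sum_{k<n}\nu_k(M_k-\mathsf{E}_k)f$. The operators $M_k-\mathsf{E}_k$ with $k<n$ live at scales coarser than $2^{-n}$. They are nearly constant on cubes of $\mathcal D_n$, except on boundary layers where the balls or dyadic cubes of scale $2^{-k}$ cut through a cube of $\mathcal D_n$.
\end{itemize}

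\textbf{Step 3: bound the errors by $(M_\cdot f^s)^{1/s}$ with geometric decay.} For positive $f$ (we may assume $f\ge0$ after splitting into four positive parts), I aim for the operator bound
\[
|A_n|+|B_n|\lesssim\sum_{j\ge0}2^{-\delta j}\,\big(M_{n-c-j}f^s\big)^{1/s}
\quad\text{or a dyadic-maximal analogue.}
\]
The mechanism is the following. The boundary layer at relative scale $2^{-j}$ has measure fraction about $2^{-j}$. Hölder's inequality then gives
\[
\frac1{|B|}\int_{B}\1_{\text{layer}}f\le\Big(\frac{|\text{layer}|}{|B|}\Big)^{1/s'}\Big(\frac1{|B|}\int_B f^s\Big)^{1/s},
\]
so the decay is $2^{-j/s'}$. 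The lack of kernel smoothness is exactly what is compensated by $s>1$.

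In the noncommutative setting, Hölder must be replaced by an operator inequality. I would use the concavity of $t\mapsto t^{1/s}$ together with a Kadison--Schwarz/Jensen inequality for the positive unital map $g\mapsto\frac1{|B|}\int_B g$. Applied to $\1_{\text{layer}}f$, and using $\1_{\text{layer}}\le1$, this gives
\[
\frac1{|B|}\int_B\1_{\text{layer}}f\le\Big(\frac{|\text{layer}|}{|B|}\Big)^{1-1/s}\Big(\frac1{|B|}\int_B f^s\Big)^{1/s}
\]
as operators. I expect this inequality to hold, via Hansen's operator Jensen inequality for operator-monotone $t^{1/s}$.

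Finally, dyadic averages over cubes comparable to balls are dominated by $M_k$ at a shifted index (up to $C_d$). Positive-operator domination by a convergent combination of families then passes to the $L_p(\n;\ell_\infty)$ norms, since $\|{\sup}^+\|$ is monotone on positive families and the triangle inequality applies. Shifting the index in $k$ does not change the maximal norm.

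\textbf{Main obstacle.} The hardest part is Step 3, specifically the term $B_n$ in the range $k<n$. There the sum over $k$ is infinite, and one must show that the boundary layers of scale-$2^{-k}$ objects, seen by cubes of $\mathcal D_n$, produce decay in $n-k$ rather than merely boundedness. This has to be done uniformly and as a positive operator inequality, not pointwise. I would first try to write $(\mathrm{id}-\mathsf{E}_n)(M_k-\mathsf{E}_k)f$ as a difference of two averages over sets whose symmetric difference has relative measure at most $C2^{-(n-k)}$. Then the signed error is split into positive and negative parts, each dominated via the operator Hölder bound above.
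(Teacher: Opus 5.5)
Your proposal is correct and is essentially the paper's proof: since $(I-\mathsf{E}_n)\mathsf{E}_k=0$ for $k\le n$ and $\mathsf{E}_n\mathsf{E}_k=\mathsf{E}_n$ for $k\ge n$, your $B_n$ and $-A_n$ are exactly the paper's $R_n^{(1)}f$ and $R_n^{(2)}f$ (up to which of them contains the $k=n$ term), and the paper bounds them just as you plan. It uses the annulus $B(x,r)\triangle B(z,r)\subset\Gamma_{n,j}(x)$ and the boundary layer $U_{n,j}(Q)$, each of relative measure $\lesssim 2^{-j}$, the operator H\"older inequality of Lemma \ref{holderinequality} (equivalent to your operator-Jensen argument), and domination by shifted $M_k(f^s)$; your detour through tail sums in Step 1 is harmless but unnecessary, since $T_N=S_{N_2+1}-S_{N_1}$ directly. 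One wording fix: what your argument produces, and what Remark \ref{rk:MaxFunct} needs, are two-sided bounds $-X\le A_n\le X$, not $|A_n|\le X$, which does not follow from them for noncommuting operators.
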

Proposition \ref{cotlar1} is substantially stronger than the classical Cotlar-type inequality of Jones and Rosenblatt \cite[Lemma 2]{JR02}, which asserts the existence of a constant $C>0$ such that
	\begin{align}\label{cotlar3}
	{\sup_{N\in\Z^2_{<}}}|T_{N}f(x)|\leq C\Big(\big(M(|Tf|^2)(x)\big)^{\frac{1}{2}}+\big(M(f^2)(x)\big)^{\frac{1}{2}}\Big),
\end{align}
where $Mf$ denotes the Hardy-Littlewood maximal function.  As is well known, \eqref{cotlar3} suffices only for $2<p<\infty$ and a weak type $(2,2)$ bound. In the commutative case, the remaining range $1< p\leq2$ can be obtained by Marcinkiewicz interpolation together with the weak $(1,1)$ estimate. However, this interpolation argument is no longer valid in the noncommutative setting since the noncommutative Marcinkiewicz interpolation theorem fails for families of non-positive operators \cite{CR}. Therefore, a genuinely new approach is required to obtain the full range $1<p<\infty$. Our proof of \eqref{cotlar2} exploits the geometric structure of the kernel and introduces a novel operator decomposition. In fact, the resulting Proposition \ref{cotlar1} is novel even in the commutative case.


The rest of the paper is organized as follows.  Section \ref{Sec:Pre} recalls the necessary background on (vector-valued) noncommutative $L_p$-spaces, as well as the noncommutative Calder\'on-Zygmund decomposition. In Section \ref{Sec:3}, we prove Proposition \ref{cotlar1} and Theorem \ref{maintheorem} (i). The proof of Theorem \ref{maintheorem} (ii) will be presented in Section \ref{Sec:4}. Section \ref{Sec:convergence} provides the b.a.u.\ convergence argument.

\medskip

\textbf{Notation:} Throughout this paper, we denote by $C$ a positive constant whose value may vary from line to line. The notation $X \lesssim Y$ means that there exists a constant $C > 0$, independent of essential parameters, such that $X \leq C Y$. Similarly, $X \approx Y$ signifies that both $X \lesssim Y$ and $Y \lesssim X$ hold simultaneously.
\section{Preliminaries}\label{Sec:Pre}
\subsection{Noncommutative $L_{p}$-spaces}
Let $\M$ be a semifinite von Neumann algebra endowed with a normal semifinite faithful trace $\tau$.
Denote by $\M_{+}$ the positive cone of $\M$ and define
$\mathcal{S_{\M+}}$ as the set of all $x\in\M_{+}$ such that $\tau(\mathrm{supp}\ x)<\infty$, where $\mathrm{supp}\ x$ is the support of $x$.
Let $\mathcal{S_{\M}}$ be the linear span of $\mathcal{S_{\M+}}$. Then  $\mathcal{S_{\M}}$ becomes a weak-${*}$-dense $*$-subalgebra of $\M$.
For $0<p<\infty$, we set
$$\|x\|_{p}=\big(\tau(|x|^p)\big)^{1/p},\ \ x\in\mathcal{S}_{\M},$$
where $|x|=(x^{*}x)^{1/2}$. The noncommutative $L_{p}$-space associated with $(\M,\tau)$, denoted by $L_{p}(\M)$, is the completion of $(\mathcal{S_{\M}},\|\cdot\|_{p})$.
For convenience we identify $L_{\infty}(\M)$ with $\M$ equipped with the operator norm $\|\cdot\|_{\M}$. Finally, let $L_{p}(\M)_{+}$ denote the positive part of $L_{p}(\M)$. We refer to \cite{FK,P2} for more information about the noncommutative $L_{p}$-spaces.

\subsection{Noncommutative $\ell_\infty$-valued $L_p$-spaces} The $\ell_\infty$-valued noncommutative $L_{p}$-spaces were introduced by Pisier \cite{pisier} and Junge \cite{J1}. Let $I$ be an index set. Given $1\le p\leq\infty$,
we define $L_p(\mathcal {M};\ell_\infty(I))$ as the space of all sequences $x=(x_k)_{k\in I}$
in $L_p(\mathcal {M})$ that admit a factorization of the form
$$x_k=ay_kb,\ \ \ k\in I,$$
where $a,b\in L_{2p}( \mathcal {M})$  and  $y=(y_k)_{k\in I}$ a bounded sequence in $\M$.
The norm of $x$ in $L_p(\mathcal {M};\ell_\infty(I))$ is given by
$$
\|x\|_{L_p(\mathcal {M};\ell_\infty(I))}=\inf\left\{\big\|a\big\|_{{2p}}\sup_{k\in I}
\big\|y_k\big\|_\infty\big\|b\big\|_{{2p}}\right\},$$
where the infimum is taken over all such  factorizations. As usual, we  also write 
$$\|{\sup_{k\in I}}^+x_k\|_p:=\|x\|_{L_p(\mathcal {M};\ell_\infty(I))}.$$

The following property can be found in \cite[Remark 4.1]{CXY13}.
\begin{rk}\label{rk:MaxFunct}
	Let $x=(x_k)_{k\in I}$ be a sequence of selfadjoint operators in $L_p(\M)$.
	Then $x\in L_p(\M;\ell_\infty(I))$ iff there exists $a\in
	L_p(\M)_+$ such that $-a\leq x_k \le a$ for all $k\in I$ and
	$$\big \|(x_k)_{k\in I} \big \|_{L_p(\mathcal {M};\ell_\infty(I))}=\inf\big\{\|a\|_p\;:\; a\in L_p(\M)_{+},\; -a\leq x_k\le a,\;\forall\; k\in I\big\}.$$
\end{rk}

We now introduce the weak maximal quasi-norm (see e.g.\ \cite{HLX}), which is the noncommutative analogue of the weak
$L_{p}$-norm of a maximal function. Given $1\leq p< \infty$,  $\Lambda_{p,\infty}(\M;\ell_{\infty}(I))$ is defined as the space of all sequence $(x_k)_{k\in I}$ in weak $L_{p}$-space such that the following quasi-norm is finite:
\begin{align}\label{weakm}
	\|(x_k)_{k\in I}\|_{\Lambda_{p,\infty}(\M;\ell_{\infty}(I))}
	=\sup_{\lambda>0}\lambda\inf_{e\in\mathcal{P}(\M)}
	 \Big\{\big(\tau(e^{\perp})\big)^{\frac{1}{p}}:\|ex_ke\|_{\infty}\leq\lambda\ \mbox{for\ all}\ k\in I\Big\},
\end{align}
where $\mathcal{P}(\M)$ denotes the set of all projections in $\M$.
As in Remark~\ref{rk:MaxFunct}, the quasi-norm admits an equivalent description (see \cite[Remark 2.3]{HLX}).
\begin{rk}\label{weakm4}
	Let $x=(x_k)_{k\in I}$ be a sequence of selfadjoint operators in $L_{p,\infty}(\M)$. Then
	\begin{align*}\label{weakm1}
		\|(x_k)_{k\in I}\|_{\Lambda_{p,\infty}(\M;\ell_{\infty}(I))}=\sup_{\lambda>0}\lambda\inf_{e\in\mathcal{P}(\M)}
		 \Big\{\big(\tau(e^{\perp})\big)^{\frac{1}{p}}:-\lambda\leq ex_ke\leq\lambda\ \mbox{for\ all}\ k\in I\Big\}.
	\end{align*}
\end{rk}

\subsection{Almost uniform convergence} The following noncommutative analogue of the usual almost everywhere convergence was introduced by Lance \cite{Lan76}. Let $L_0(\M)$ be the $*$-algebra of $\tau$-measurable operators (see e.g. \cite{FK} for the precise definition).
\begin{defi}
Let $\mathcal {M}$ be a semifinite von Neumann algebra equipped with a normal semifinite faithful trace $\tau$. Let $x_k,x\in L_0(\M)$. The sequence
$(x_k)$ is said to converge bilaterally almost uniformly (b.a.u. in short) to $x$ if for any $\delta>0$, there is a
	projection $e\in \M$ such that
	$$\tau(e^\bot)<\delta \ \ \  \mbox{and} \ \ \ \lim _{k\rightarrow\infty}\|e(x_k-x)e\|_\infty=0.$$
\end{defi}
Note that in the commutative setting, the convergence defined above coincides with the usual almost everywhere convergence by virtue of Egorov theorem. 

\subsection{Noncommutative Calder{\'o}n-Zygmund decomposition}
In this subsection, we recall the noncommutative Calder{\'o}n-Zygmund decomposition based on noncommutative martingale theory and  Cuculescu's construction \cite{Cuc}.
Recall that a dyadic cube in $\R^{d}$ is the set
$$[2^{k}m_{1},2^{k}(m_{1}+1))\times\cdot\cdot\cdot\times[2^{k}m_{d},2^{k}(m_{d}+1)),$$
where $k,m_{1},\cdot\cdot\cdot,m_{d}\in\Z$.
Let $\Q$ denote the family of all dyadic cubes, and for $k \in \Z$ write $\Q_k=\{Q\in\Q:\ell(Q)=2^{-k}\}$. The $\sigma$-algebra generated by $\Q_k$ is denoted by $\sigma_k$ and set $\mathcal N_k=L_\infty(\mathbb R^d,\sigma_k,dx)\overline{\otimes}\M$. Then $(\mathcal{N}_k)_{k \in \Z}$ is an increasing filtration of von Neumann subalgebras whose union is weak-$*$-dense in $\mathcal N$. The associated conditional expectations are given by
$$\mathsf{E}_k(f):=f_{k}=\sum_{Q \in \Q_k}^{\null} f_Q \chi_Q,\;\forall f\in L_1(\mathcal N),$$
where $\chi_Q$ is the characteristic function of $Q$ and 
$f_Q = \frac{1}{|Q|} \int_Q f(y) \, dy.$

Consider
$$\mathcal N_{c,+} =\Big\{
f: \R^d \to \M\cap L_1(\M) \, \big| \ f \geq0, \
\overrightarrow{\mathrm{supp}} \hskip1pt f \ \ \mathrm{is \
compact} \Big\},$$
where
$\overrightarrow{\mathrm{supp}}$ stands for the support of $f$ as an
operator-valued function in $\R^d$.
It is not difficult to see that $\mathcal N_{c,+}$ is dense in $L_1(\mathcal N)_{+}$. Moreover, given $f \in \mathcal{N}_{c,+}$ and $\lambda>0$, there exists $m_{\lambda}(f)\in\Z$ such that $f_{k}\leq\lambda\1_{\mathcal{N}}$ for all $k\leq m_{\lambda}(f)$ (see \cite[Lemma 3.1]{JP1}), where $\1_{\mathcal{N}}$ denotes the unit
elements in $\mathcal{N}$. We will need
the following modified Cuculescu's theorem \cite[Lemma 3.1]{JP1}.
\begin{lem}\label{cucu}
Let $f \in \mathcal{N}_{c,+}$ and $\lambda>0$.
Then there exists a decreasing sequence of projections $(q_k)_{k\in\Z}$ defined by
$$q_k=\begin{cases} \1_\mathcal{N} & \mbox{if} \ k\leq m_{\lambda}(f),
	\\ \chi_{(0,\lambda]}(q_{k-1} f_k q_{k-1}) & \mbox{if} \ k > m_{\lambda}(f),
\end{cases}$$
with the following properties hold:
\begin{itemize}
\item [(i)] $q_k$ commutes with $q_{k-1} f_k
q_{k-1}$ for each $k\in\Z$;

\item [(ii)] $q_k$ belongs to $\mathcal{N}_k$ and $q_k f_k q_k \le \lambda \hskip1pt
q_k$ for each $k\in\Z$;

\item [(iii)] $\varphi \Big(
\mathbf{1}_\mathcal{N} - \bigwedge_{k \in\Z} q_k \Big) \le
\frac{\|f\|_1}{\lambda}.$
\end{itemize}
\end{lem}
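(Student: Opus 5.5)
The plan is to run Cuculescu's construction directly along the dyadic filtration $(\mathcal N_k)_{k\in\Z}$, starting at the level $m_\lambda(f)$ below which nothing needs to be done. For $k\le m_\lambda(f)$ we set $q_k=\1_{\mathcal N}$. Properties (i)--(ii) then hold trivially: $q_k f_k q_k=f_k\le\lambda\1_{\mathcal N}$ by the choice of $m_\lambda(f)$ quoted from \cite[Lemma 3.1]{JP1}.

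For $k>m_\lambda(f)$ we argue by induction. Assume $q_{k-1}\in\mathcal N_{k-1}$. Since $\mathcal N_{k-1}\subset\mathcal N_k$ and $f_k=\mathsf E_k f\in\mathcal N_k$, the positive operator $x_k:=q_{k-1}f_kq_{k-1}$ lies in $\mathcal N_k$. Every spectral projection of $x_k$ therefore also lies in $\mathcal N_k$ and commutes with $x_k$, which gives (i) and the first half of (ii).

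The spectral set must be read relative to $q_{k-1}$: $q_k$ should be the spectral projection of $x_k$ for $[0,\lambda]$, compressed to the range of $q_{k-1}$, i.e.\ $q_k=q_{k-1}\chi_{[0,\lambda]}(x_k)$. This is the meaning of $\chi_{(0,\lambda]}$ in the statement, as in \cite{JP1}; it ensures that $1-q_{k-1}\subset\ker x_k$ is discarded. Since $q_{k-1}$ commutes with $x_k$, $q_k$ is a projection with $q_k\le q_{k-1}$, so the sequence is decreasing. Moreover $q_kf_kq_k=q_kx_kq_k\le\lambda q_k$ by the spectral theorem, which completes (ii).

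For (iii) we write, using normality of $\varphi$ and monotonicity,
\begin{align*}
\varphi\Big(\1_{\mathcal N}-\bigwedge_{k}q_k\Big)=\lim_{K\to\infty}\sum_{k=m_\lambda(f)+1}^{K}\varphi(q_{k-1}-q_k).
\end{align*}
Each difference $q_{k-1}-q_k$ is the spectral projection of $x_k$ for $(\lambda,\infty)$ restricted to $q_{k-1}$. It commutes with $x_k$, so
\begin{align*}
\lambda(q_{k-1}-q_k)\le(q_{k-1}-q_k)x_k(q_{k-1}-q_k)=(q_{k-1}-q_k)f_k(q_{k-1}-q_k).
\end{align*}
Taking traces and using that $q_{k-1}-q_k\in\mathcal N_k$, the trace-preserving property of $\mathsf E_k$ gives
\begin{align*}
\lambda\varphi(q_{k-1}-q_k)\le\varphi((q_{k-1}-q_k)f).
\end{align*}
Summing telescopically yields
\begin{align*}
\lambda\sum_{k\le K}\varphi(q_{k-1}-q_k)\le\varphi\big((1-q_K)f\big)=\varphi\big(f^{1/2}(1-q_K)f^{1/2}\big)\le\varphi(f)=\|f\|_1.
\end{align*}
Letting $K\to\infty$ proves (iii).

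The only delicate point is making sense of the traces, since $f_k$ need not be trace class on all of $\R^d$. Because $f\in\mathcal N_{c,+}$, we have $f\in L_1(\mathcal N)\cap\mathcal N$, so every quantity $\varphi(pf)$ with $p$ a projection is finite. The identity $\varphi(pf_kp)=\varphi(pf)$ for $p\in\mathcal N_k$ is then a genuine instance of the $L_1$ duality of the conditional expectation. I expect this bookkeeping, together with the correct reading of the spectral projection relative to $q_{k-1}$, to be the main thing to check carefully. The rest is the standard Cuculescu argument.
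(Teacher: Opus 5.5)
Your proof is correct and is the standard Cuculescu argument. The paper gives no proof of its own and simply quotes the lemma from \cite[Lemma 3.1]{JP1}, which runs exactly this construction and trace estimate along the dyadic filtration. Your insistence on reading $q_k$ as $q_{k-1}\chi_{[0,\lambda]}(q_{k-1}f_kq_{k-1})$ is in fact necessary. Taken literally, $\chi_{(0,\lambda]}(q_{k-1}f_kq_{k-1})$ would also discard the part of $q_{k-1}$ on which $q_{k-1}f_kq_{k-1}$ vanishes, for instance all cubes away from the support of $f$, where $f_k=0$. That part has infinite trace, so (iii) would fail under the literal reading.
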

Due to the atomic nature of filtrations, the projection $q_k$ constructed in Lemma \ref{cucu} admits the following expression
$$q_{k}=\sum_{Q\in\Q_{k}}q_{Q}\chi_{Q},$$
for each $k\in\Z$, and $q_{Q}$ are projections in $\M$ such that
$$q_{Q}=\begin{cases} \1_\M & \mbox{if} \ k \leq
m_{\lambda}(f),
\\ \chi_{(0,\lambda]} \big( q_{\widehat{Q}} f_Q q_{\widehat{Q}}\big) & \mbox{if} \ k > m_{\lambda}(f),
\end{cases}$$
where $\widehat{Q}$ is the dyadic father of $Q$. 
For $k\in\Z$, define
\begin{equation}\label{czd1}
p_k=q_{k-1}-q_k=\sum_{Q\in\Q_{k}}(q_{\widehat{Q}}-q_{Q})\chi_{Q}\triangleq\sum_{Q\in\Q_{k}}p_Q\chi_{Q},
\end{equation}
so that
$$\sum_{k \in \Z} p_k = \1_\mathcal{N} - q =q^\perp\quad \mbox{with} \quad q = \bigwedge_{k \in
\Z} q_k.$$

The noncommutative Calder{\'o}n-Zygmund decomposition is stated as follows \cite{CCP}.
\begin{thm}\label{czdecom}
Given $f\in\mathcal N_{c,+}$ and $\lambda>0$. Let $(q_k)_{k\in\Z}$ and $(p_k)_{k\in\Z}$ be two sequences of projections constructed above. Define a projection $\zeta\in \mathcal N$ by
\begin{equation}\label{czd9}
\zeta = \big(\bigvee_{Q\in \Q} p_Q\chi_{5Q}\big)^{\bot},
\end{equation}
where $5Q$ denotes the cube with the same center as $Q$ and $\ell(5Q)=5\ell(Q)$.
Then there exists a decomposition
\begin{equation}\label{czd76789}
f = g + b+h
\end{equation}
satisfying the following properties:
\begin{enumerate}[(1)]
\item [(i)]$\varphi(\mathbf 1_\mathcal{N}-\zeta) \leq 5^{d}\dfrac{\|f\|_1}{\lambda}$;
\item[(ii)]$g=qfq+\sum_{k\in\Z}p_{k}f_{k}p_{k}$ satisfies
$\| g\|_1 \le\|f\|_1 \quad \mbox{and} \quad \|
g\|_\infty \le 2^{d} \lambda$;
\item[(iii)]$b=\sum_{k\in\Z}b_{k}$ with
\begin{equation}\label{czd6}
b_{k}=p_k (f-f_{k}) p_{k}.
\end{equation}
Each $b_{k}$ satisfies two cancellation conditions: for $Q\in \Q_{k}$,
     $\int_Q b_{k} = 0;$
    and for all $x,y\in\R^{d}$ such that $y \in 5Q_{x,k}$, $\zeta(x)b_{k}(y)\zeta(x) = 0$, where $Q_{x,k}$ is the unique cube in $\Q_k$ containing $x$;
\item[(iv)]$h=\sum_{k\in\Z}h_{k}$ with
\begin{equation}\label{czd66}
	h_{k}=p_k (f-f_{k}) q_{k}+q_{k}(f-f_{k})p_k.
\end{equation}
Moreover, each $h_{k}$ satisfies the same cancellation conditions as $b_k$ described above.
\end{enumerate}
\end{thm}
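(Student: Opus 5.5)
The plan is to build the decomposition directly from the Cuculescu projections of Lemma \ref{cucu}. We split $f$ according to $\1_\n=q+\sum_k p_k$ and then show that the pieces regroup into the stated $g$, $b$, $h$. First reduce to a finite range: $q_k=\1_\n$ for $k\le m_\lambda(f)$, hence $p_k=0$ there. Moreover $q_k=q+\sum_{j>k}p_j$ with the $p_j$ mutually orthogonal, and $q_k\downarrow q$ strongly.

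\emph{Step 1: the algebraic identity.} Expanding $f=(q+\sum_j p_j)f(q+\sum_k p_k)$ and grouping by $\min(j,k)$ gives
\[
f=qfq+\sum_k p_kfp_k+\sum_k\big(p_kfq_k+q_kfp_k\big).
\]
It remains to check that the $f_k$-terms in $b$ and $h$ cancel, i.e. $p_kf_kq_k=0=q_kf_kp_k$. Since $p_k,q_k\le q_{k-1}$, Lemma \ref{cucu}(i) gives
\[
p_kf_kq_k=p_k\,(q_{k-1}f_kq_{k-1})\,q_k=p_kq_k\,(q_{k-1}f_kq_{k-1})=0.
\]
Hence $g+b+h=f$. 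Convergence of the sums is handled in $L_1(\n)$ (or in measure) using $\sum_kp_k=q^\perp$ strongly together with $f\in L_1\cap L_\infty$. I expect this convergence bookkeeping to be routine but slightly delicate.

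\emph{Step 2: properties (i) and (ii).}
\begin{itemize}
\item For (i), by subadditivity of $\varphi$ on suprema of projections,
\[
\varphi(\1-\zeta)\le\sum_Q 5^d|Q|\tau(p_Q)=5^d\sum_k\varphi(p_k)=5^d\varphi(q^\perp)\le 5^d\|f\|_1/\lambda,
\]
using Lemma \ref{cucu}(iii).
\item For $\|g\|_1$: $g\ge0$, and $p_k\in\n_k$ gives $\varphi(p_kf_kp_k)=\varphi(p_kfp_k)$. Hence $\|g\|_1=\varphi(qfq)+\sum_k\varphi(p_kfp_k)=\varphi(f)=\|f\|_1$.
\item For $\|g\|_\infty$: the blocks $q$, $p_k$ are mutually orthogonal, so it suffices to bound each block. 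Since $f\ge0$, we have $f_Q\le 2^d f_{\widehat Q}$. This yields
\[
p_kf_kp_k\le p_k\,(q_{k-1}\,2^df_{k-1}\,q_{k-1})\,p_k\le 2^d\lambda\,p_k,
\]
where the last inequality is Lemma \ref{cucu}(ii) at level $k-1$.
\item For the block $qfq$: $qf_kq=q\,(q_kf_kq_k)\,q\le\lambda q$ for all $k$, and $f_k\to f$ (martingale convergence in $L_1(\n)$). Since the cone $\{x\le\lambda q\}$ is closed in measure, $qfq\le\lambda q$.
\end{itemize}
The main obstacle I anticipate is this last limiting step, which must be made rigorous in the $\tau$-measurable topology.

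\emph{Step 3: cancellation (iii), (iv).} On $Q\in\Q_k$, $p_k$ equals the constant $p_Q$. Therefore
\[
\int_Qb_k=p_Q\Big(\int_Qf-|Q|f_Q\Big)p_Q=0,
\]
and similarly for $h_k$, since $q_k$ is also constant on $Q$.

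For the support condition, let $y\in5Q_{x,k}$ and write $\ell=2^{-k}$. In sup-norm, $|c_{Q_{y,k}}-c_{Q_{x,k}}|_\infty<3\ell$. Since this difference is an integer multiple of $\ell$, it is at most $2\ell$. Hence $|x-c_{Q_{y,k}}|_\infty<2.5\ell$, i.e. $x\in5Q_{y,k}$. Consequently $\zeta(x)\le\1-p_{Q_{y,k}}$, so $\zeta(x)p_{Q_{y,k}}=0$.

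Every term of $b_k(y)$ and of $h_k(y)$ carries a factor $p_{Q_{y,k}}$ on the left or on the right. Therefore $\zeta(x)b_k(y)\zeta(x)=0=\zeta(x)h_k(y)\zeta(x)$.
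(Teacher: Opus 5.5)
Your argument is correct. The paper does not prove this theorem; it quotes it from \cite{CCP}. The only ingredient it re-derives is the vanishing $p_nf_nq_n=q_nf_np_n=0$ behind \eqref{termh} in Section~4.4, and it does so exactly as in your Step~1, using $p_n,q_n\le q_{n-1}$ and Lemma~\ref{cucu}(i).

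The rest of your reconstruction is the standard Parcet/CCP argument:
\begin{itemize}
\item subadditivity of $\varphi$ over suprema of projections gives (i);
\item the identity $\varphi(p_kf_kp_k)=\varphi(p_kfp_k)$, together with the parent bound $f_Q\le 2^d f_{\widehat Q}$ and Lemma~\ref{cucu}(ii) at level $k-1$, gives (ii);
\item the constancy of $p_k,q_k$ on cubes of $\Q_k$, together with the symmetry $y\in 5Q_{x,k}\Leftrightarrow x\in 5Q_{y,k}$, gives (iii)--(iv).
\end{itemize}

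The two points you flag as delicate are routine.
\begin{itemize}
\item Convergence. Since $f\in L_1(\n)\cap\n\subset L_2(\n)$ and $\{q\}\cup\{p_k\}$ is an orthogonal family summing strongly to $\1_{\n}$, the expansion $f=\sum_{\alpha,\beta}e_\alpha f e_\beta$ is an orthogonal series in $L_2(\n)$. So your regrouping by $\min(j,k)$ is legitimate, and all the series defining $g$, $b$, $h$ converge in $L_2(\n)$.
\item The bound $qfq\le\lambda q$. Test $\varphi\big((\lambda q-qf_kq)y\big)\ge 0$ against every $y\in(L_1(\n)\cap\n)_+$, and let $k\to\infty$ using $f_k\to f$ in $L_1(\n)$.
\end{itemize}

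One cosmetic caveat concerns the geometric step. Your strict bound $|c_{Q_{y,k}}-c_{Q_{x,k}}|_\infty<3\ell$ requires taking $5Q$ half-open, like the dyadic cubes. Then each coordinate of $x-c_{Q_{y,k}}$ lies in $[-\tfrac52\ell,\tfrac52\ell)$, which is exactly membership in $5Q_{y,k}$; this is what you want, rather than a strict bound on $|x-c_{Q_{y,k}}|_\infty$. With closed $5Q$ the symmetry fails on a null set, which is harmless since all the identities are understood almost everywhere.
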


\section{Proof of Proposition \ref{cotlar1} and Theorem \ref{maintheorem} (i)}\label{Sec:3}

In this section, we establish Proposition \ref{cotlar1} and the strong type
$(p,p)$ estimate for $(T_N)_{N\in\Z^2_{<}}$ stated in Theorem \ref{maintheorem} (i). We begin with several auxiliary lemmas. The first one is taken from  \cite[Theorem 1.1]{Xu2023}.
\begin{lem}\label{xu2023}
	Let $T$ be defined as (\ref{xu-zhang2026}) with an arbitrary sequence $\nu=(\nu_{k})$ satisfying $\|\nu\|_{\ell_{\infty}}\leq1$. Then for any $f\in L_p(\n)$ with $1<p<\infty$,  there exists a positive  constant $C_{d,p}$ such that
	$$\|Tf\|_{p}\leq C_{d,p}\|f\|_p.$$
\end{lem}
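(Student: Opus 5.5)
The statement just above is Lemma \ref{xu2023}, quoted from \cite[Theorem 1.1]{Xu2023}. The plan is to reprove it by a Calder\'on--Zygmund scheme: an $L_2$ estimate, a weak type $(1,1)$ estimate, interpolation to $1<p\le 2$, and duality for $p>2$. Throughout, $T$ means a finite sum $\sum_{k=N_1}^{N_2}\nu_k(M_k-\mathsf E_k)$, and all bounds must be uniform in $N$ and in $\nu$; the infinite sum then follows by the standard approximation cited from \cite[Section 6.A]{JMX}.

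\emph{Step 1 ($p=2$).} Since $L_2(\n)=L_2(\R^d;L_2(\M))$ is a Hilbert-space-valued $L_2$, it suffices to treat scalar functions. Write $D_k=M_k-\mathsf E_k$ and use almost orthogonality in the Cotlar--Stein form. Set $\Delta_j=\mathsf E_j-\mathsf E_{j-1}$ and split $f=\sum_j\Delta_j f$. The key estimate is
$$\|D_k\Delta_j\|_{2\to2}\lesssim 2^{-\epsilon|k-j|}.$$
For $j\le k$, $\Delta_j f$ is constant on cubes of side $2^{-j}$, and $D_k$ only sees the $2^{-k}$-neighbourhood of the boundaries of those cubes, a set of relative measure about $2^{j-k}$. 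For $j>k$, both $M_k$ and $\mathsf E_k$ annihilate the mean-zero oscillation of $\Delta_j f$ away from that same boundary layer. It follows that $\|\sum_k\nu_kD_kf\|_2\lesssim\|f\|_2$.

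\emph{Step 2 (weak type $(1,1)$ in the semicommutative setting).} Take $f\in\mathcal N_{c,+}$ (general $f$ is a combination of four such), fix $\lambda$, and apply Theorem \ref{czdecom} to write $f=g+b+h$. The good part is handled by Step 1, since $\|g\|_2^2\le\|g\|_1\|g\|_\infty\lesssim\lambda\|f\|_1$. For $b$ and $h$, restrict to $\zeta$ and estimate $\zeta T(b_j)\zeta$ term by term in $k$.

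The support condition in Theorem \ref{czdecom}(iii) kills the terms with $k\ge j$: the kernel of $M_k$ lives at scale $2^{-k}\le 2^{-j}$, which lies inside $5Q$, and $\mathsf E_k$ sees only the cube $Q_{x,k}$.

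For $k<j$, the cancellation $\int_Q b_j=0$ makes $\mathsf E_k b_j=0$. The term $M_kb_j(x)$ is nonzero only when $x$ lies within $2^{-j}$ of the sphere $|x-c_Q|\approx 2^{-k}$. Hence
$$\|\zeta M_k b_j\zeta\|_{L_1(\n)}\lesssim 2^{-d k}\cdot 2^{-(d-1)k}2^{-j}\cdot 2^{dk}\|b_j\|_1\sim 2^{k-j}\|b_j\|_1,$$
which is summable in $k<j$. This non-smooth cancellation, with its geometric gain $2^{k-j}$, is the main obstacle. It replaces the H\"ormander condition, and it must be carried out for the off-diagonal pieces $h_j$, which are not positive. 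For $h_j$ I would follow the Parcet/Cadilhac--Conde--Parcet arguments: use $\|h_j\|_1$-type bounds via the factorization $p_k(f-f_k)q_k$, together with Cauchy--Schwarz in $L_2$ weighted by $q_k f_kq_k\le\lambda$.

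\emph{Step 3 (interpolation and duality).} Since $T$ is linear, noncommutative Marcinkiewicz interpolation between Steps 1 and 2 gives $1<p\le2$. The adjoint is $T^*=\sum\bar\nu_k(M_k-\mathsf E_k)$, because both $M_k$ (whose kernel is symmetric) and $\mathsf E_k$ are self-adjoint. So $T^*$ is of the same form, and duality gives $2\le p<\infty$.
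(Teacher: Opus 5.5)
Your proposal is correct in outline and follows the natural scheme for this lemma. The paper gives no proof of its own: it imports the lemma from \cite[Theorem 1.1]{Xu2023}, and its introduction locates the difficulty exactly where your Step 2 does, namely the weak type $(1,1)$ bound for a kernel without regularity. Your scheme combines that bound, obtained from Theorem \ref{czdecom}, with the $L_2$ bound, real interpolation for the single linear operator $T$, and duality via self-adjointness of $M_k$ and $\mathsf E_k$. Your Step 2 is essentially the non-maximal version of Section \ref{Sec:4}.

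The one place to be precise is the off-diagonal term $h$.

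\begin{itemize}
\item Do not estimate $\|\zeta M_k h_j\zeta\|_1$ by $2^{k-j}\|h_j\|_{L_1(\n)}$ and then sum over $j$. No bound of the form $\sum_j\|h_j\|_{L_1(\n)}\lesssim\|f\|_1$ is available; this is the usual off-diagonal obstruction.
\item Instead, follow the proof of \eqref{bd11}. Write $h_j=p_jfq_j+q_jfp_j$ as in \eqref{termh}. For each $Q\in\Q_j$ cut by $x+\partial B_k$, apply the column Cauchy--Schwarz inequality to the integral $\int_{(x+B_k)\cap Q}p_Qfq_Q$, not to pointwise values.
\item This gives the bound $(\lambda|Q|\,\tau(p_Q)\,\varphi(fp_Q\chi_Q))^{1/2}$, because $q_Qf_Qq_Q\le\lambda q_Q$ holds only on average over $Q$.
\item Then sum over $Q$ and $j$ using Cauchy--Schwarz and Lemma \ref{cucu}(iii).
\end{itemize}

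Your phrase ``Cauchy--Schwarz weighted by $q_kf_kq_k\le\lambda$'' points to this argument, but ``$\|h_j\|_1$-type bounds'' read literally would not close.

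Two minor slips:
\begin{itemize}
\item The factor $2^{-dk}$ in your $b$-estimate is spurious. The correct count is $|B_k|^{-1}\cdot 2^{-(d-1)k}2^{-j}\approx 2^{k-j}$.
\item In Step 1 with $j>k$, the relevant layer is the $2^{-j}$-neighbourhood of the sphere $x+\partial B_k$, and $\mathsf E_k\Delta_j=0$ identically. It is not ``the same boundary layer'' as in the case $j\le k$.
\end{itemize}
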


The following operator-valued version of the H\"{o}lder inequality is well-known to experts (see e.g. \cite[Lemma 2.4]{HLRX}).
\begin{lem}\label{holderinequality}
	Let $(\Omega,\mu)$ be a measure space. Then for $1< p<\infty$ and let $p^{\prime}$ be the conjugate of $p$, that is $1/ p^\prime +1/p =1$,
	\begin{equation}\label{maximal132}
		 \int_{\Omega}f(x)g(x)d\mu(x)\leq\Big(\int_{\Omega}f(x)^{p}d\mu(x)\Big)^{\frac{1}{p}}
		 \Big(\int_{\Omega}g(x)^{p^{\prime}}d\mu(x)\Big)^{\frac{1}{p^{\prime}}},
	\end{equation}
	where $f:\Omega\rightarrow L_{1}(\mathcal{M})+L_{\infty}(\mathcal{M})$ and $g:\Omega\rightarrow \mathbb{C}$ are positive functions such that all members of inequality (\ref{maximal132}) make sense.
\end{lem}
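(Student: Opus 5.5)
The idea is to reduce the inequality to an operator Jensen inequality for the operator concave function $t\mapsto t^{1/p}$ with respect to a probability measure built from $g$. Write
$$A=\int_\Omega f(x)^p\,d\mu(x),\qquad c=\Big(\int_\Omega g(x)^{p'}\,d\mu(x)\Big)^{1/p'}.$$
If $c=0$ then $g=0$ $\mu$-a.e., and both sides vanish. If $c=\infty$ there is nothing to prove. So assume $0<c<\infty$. Points where $g=0$ contribute nothing to the left-hand side, so we may restrict to $\Omega_+=\{g>0\}$. On $\Omega_+$ define the probability measure $d\nu=c^{-p'}g^{p'}\,d\mu$ and the positive operator-valued function $h=f\,g^{1-p'}$.

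Since $g$ is scalar, $h^p=f^p g^{p(1-p')}=f^p g^{-p'}$, because $p(1-p')=-p'$. This gives
$$\int_{\Omega}fg\,d\mu=c^{p'}\int_{\Omega_+}h\,d\nu,\qquad \int_{\Omega_+}h^p\,d\nu=c^{-p'}\int_{\Omega_+}f^p\,d\mu\le c^{-p'}A.$$

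The key step is the operator Jensen inequality
$$\int_{\Omega_+}(h^p)^{1/p}\,d\nu\le\Big(\int_{\Omega_+}h^p\,d\nu\Big)^{1/p}.$$
It follows from the operator concavity of $\varphi(t)=t^{1/p}$ on $[0,\infty)$, which holds since $0<1/p<1$ (Löwner--Heinz). Operator concavity gives $\sum_i\lambda_i\varphi(x_i)\le\varphi(\sum_i\lambda_i x_i)$ for finite convex combinations of positive operators. I would pass to the integral by approximation:
\begin{itemize}
\item first replace $h^p$ by $\nu$-simple functions converging to it in the appropriate sense (e.g.\ in $L_1(\mathcal M)+L_\infty(\mathcal M)$ for Bochner-integrable data);
\item then use the norm continuity of $\varphi$ on bounded parts together with a spectral truncation $x\mapsto x\wedge n$;
\item finally let $n\to\infty$ using monotonicity of $\varphi$ (it is operator monotone) and normality of the trace.
\end{itemize}

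Granting this, operator monotonicity of $t^{1/p}$ yields
$$\int fg\,d\mu\le c^{p'}\big(c^{-p'}A\big)^{1/p}=c^{p'(1-1/p)}A^{1/p}=c\,A^{1/p},$$
since $p'(1-1/p)=1$. This is exactly \eqref{maximal132}.

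The main obstacle is the rigorous justification of Jensen's inequality for integrals rather than finite sums when $f$ takes values in $L_1(\mathcal M)+L_\infty(\mathcal M)$, possibly unbounded. This is where the truncation and approximation step must be handled carefully, using that all members of \eqref{maximal132} are assumed to make sense. The algebraic reduction above is routine.
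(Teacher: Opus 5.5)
Your proof is correct. The paper does not prove this lemma; it quotes it from the literature as well known. What you wrote is a standard way to prove it:
\begin{enumerate}
\item normalizing by $g^{p'}$ turns $\mu$ into a probability measure $\nu$ on $\{g>0\}$;
\item Jensen's inequality for the operator concave function $t\mapsto t^{1/p}$ gives $\int h\,d\nu\le(\int h^p\,d\nu)^{1/p}$;
\item operator monotonicity of $t^{1/p}$ lets you pass from $\int_{\{g>0\}}f^p\,d\mu\le A$ to the $1/p$-th powers.
\end{enumerate}
The exponent identities $p(1-p')=-p'$ and $p'(1-1/p)=1$ are right.

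Working with the concave function $t^{1/p}$ is what makes the argument valid for every $1<p<\infty$. The alternative, proving $(\int fg\,d\mu)^p\le c^p\int f^p\,d\mu$ via convexity of $t^p$, fails for large $p$, since $t^p$ is operator convex only for $p\le 2$. The paper does need large exponents: in the proof of Proposition \ref{cotlar1} it applies the lemma with an exponent $s=(1+p)/2$, which can be arbitrarily large.

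For the limiting step you flag, it is simpler to truncate first and skip simple functions altogether.
\begin{enumerate}
\item Put $X=h^p$ and $X_n=X\wedge n$ pointwise by functional calculus.
\item The map $Y\mapsto\int Y\,d\nu$ from $L_\infty(\Omega_+,\nu)\overline{\otimes}\M$ to $\M$ is unital and positive. So the Davis--Choi Jensen inequality gives $\int X_n^{1/p}\,d\nu\le(\int X_n\,d\nu)^{1/p}\le(\int X\,d\nu)^{1/p}$, the last step by operator monotonicity.
\item Since $X_n^{1/p}=h\wedge n^{1/p}\uparrow h$, pair with positive $y\in L_1(\M)\cap\M$ and use scalar monotone convergence to get $\int h\,d\nu\le(\int X\,d\nu)^{1/p}$.
\end{enumerate}
Your ordering, approximating the possibly unbounded $h^p$ by simple functions in $L_1(\M)+L_\infty(\M)$ before truncating, would need a continuity property of $X\mapsto X^{1/p}$ on that space that you do not justify. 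Also, $\M$-valued functions need not be norm limits of simple functions. This is bookkeeping, not a gap in the idea.
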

Now we are ready to prove Proposition \ref{cotlar1}
\begin{proof}[Proof of Proposition \ref{cotlar1}.]
By decomposing $f = f_{1}-f_{2} +i(f_{3}-f_{4})$ with positive operator-valued functions $f_{j}$ satisfying $\|f_{j}\|_{1}\leq\|f\|_{1}$ for all $j\in\{1,2,3,4\}$, we may assume that $f$ is positive. For the same reason, we may assume the sequence $(\nu_k)$ is positive. In the following fix $1<s<\infty$ and $1<p<\infty$. For each $n\in\Z$, define
$$ S_nf=\sum_{k\le n}\nu_k(M_{k}-\mathsf{E}_k)f.$$
Then it follows from the triangle inequality in $L_{p}(\n;\ell_\infty)$ that
$$\|{\sup_{N\in\Z^2_{<}}}^+T_{N}f\|_{p}\leq2\|{\sup_{n\in\Z}}^+S_{n}f\|_{p}.$$
Hence, it suffices to show
\begin{align}\label{XZmain2}
\|{\sup_{n\in\Z}}^+S_{n}f\|_{p}\leq C_{d,s}\Big(\|{\sup_{n\in\Z}}^+\mathsf{E}_{n}(Tf)\|_{p}+\|{\sup_{k\in\Z}}^+(M_k f^s)^{\frac{1}{s}}\|_{p}\Big).
\end{align}
Recall that
$\mathsf{E}_n\mathsf{E}_k=\mathsf{E}_k\mathsf{E}_n=\mathsf{E}_{\min\{n,k\}}.$ This implies that
\begin{align*}
	\mathsf{E}_nTf
	&=
	\sum_{k\le n}\nu_k(\mathsf{E}_nM_k-\mathsf{E}_k)f
	+
	\sum_{k>n}\nu_k\mathsf{E}_n(M_k-I)f .
\end{align*}
Subtracting above identity from the definition of $S_n$ yields
\begin{equation*}\label{eq:cotlar-decomposition}
	S_nf=\mathsf{E}_nTf+R_n^{(1)}f+R_n^{(2)}f,
\end{equation*}
where
\begin{equation*}\label{eq:R1}
	R_n^{(1)}f
	=
	\sum_{k\le n}\nu_k(I-\mathsf{E}_n)M_kf
\end{equation*}
and
\begin{equation*}\label{eq:R2}
	R_n^{(2)}f
	=
	-\sum_{k>n}\nu_k\mathsf{E}_n(M_k-I)f.
\end{equation*}
Therefore, by using  the triangle inequality in $L_{p}(\n;\ell_\infty)$ once more, the proof of (\ref{XZmain2}) will be finished if one can show
\begin{align}\label{XZmain4}
	\|{\sup_{n\in\Z}}^+	R_n^{(1)}f\|_{p}\leq C_{d,s}\|{\sup_{k\in\Z}}^+(M_k f^s)^{\frac{1}{s}}\|_{p}.
\end{align}
\begin{align}\label{XZmain5}
	\|{\sup_{n\in\Z}}^+	R_n^{(2)}f\|_{p}\leq C_{d,s}\|{\sup_{k\in\Z}}^+(M_k f^s)^{\frac{1}{s}}\|_{p}.
\end{align}

We first prove (\ref{XZmain4}). Rewrite
$$R_n^{(1)}f=\sum_{j\ge0}\nu_{n-j}(I-\mathsf{E}_n)M_{n-j}f.$$
We claim that for every $j\ge0$ and every positive $f$, there exists a positive constant $C_d$ such that
\begin{align}\label{XZmain6}
	\|
	{\sup_{n\in\Z}}^+(I-\mathsf{E}_n)M_{n-j}f
	\|_{p}
	\le C^{\frac{1}{s'}}_{d}\,2^{-\frac{j}{s'}}\|{\sup_{k\in\Z}}^+(M_k f^s)^{\frac{1}{s}}\|_{p},
\end{align}
where $s'=s/(s-1)$.
Clearly, (\ref{XZmain6}) implies (\ref{XZmain4}). Indeed, by the triangle inequality and noting $\|\nu\|_\infty\leq1$, we get
$$\|{\sup_{n\in\Z}}^+	R_n^{(1)}f\|_{p}\leq\sum_{j\ge0}	 \|
{\sup_{n\in\Z}}^+(I-\mathsf{E}_n)M_{n-j}f
\|_{p}\leq C_{d,s}\|{\sup_{k\in\Z}}^+(M_k f^s)^{\frac{1}{s}}\|_{p}.$$
To prove (\ref{XZmain6}), fix $n,j$ and put $r=2^{-(n-j)}.$ For $x\in\mathbb R^d$, let $Q=Q_n(x)$ be the unique cube in
$\Q_n$ containing $x$. A simple calculation shows that the kernel of $(I-\mathsf{E}_n)M_{n-j}$ is
$$h_{n,j}(x,y)=\frac{1}{|Q||B(0,r)|}\int_Q\big[
\chi_{B(x,r)}(y)-\chi_{B(z,r)}(y)\big]\,dz.$$
Define the positive kernel
\[
a_{n,j}(x,y)
=
\frac{1}{|Q||B(0,r)|}
\int_Q
\left|
\chi_{B(x,r)}(y)-\chi_{B(z,r)}(y)
\right|\,dz
\]
and the associated positive operator
\[
A_{n,j}f(x)
=
\int_{\mathbb R^d}a_{n,j}(x,y)f(y)\,dy.
\]
Then
\begin{equation}\label{eq:order-A}
-A_{n,j}f\le (I-\mathsf{E}_n)M_{n-j}f\le A_{n,j}f.
\end{equation}
If $z\in Q$, then
\[
|x-z|\le\sqrt d\,2^{-n}.
\]
For any $y\in B(x,r)\triangle B(z,r)$, the triangle inequality gives
\[
\big||y-x|-r\big|
\le |x-z|
\le\sqrt d\,2^{-n}.
\]
Thus
\[
B(x,r)\triangle B(z,r)
\subset \Gamma_{n,j}(x),
\]
where
\[
\Gamma_{n,j}(x)
=
\left\{
y\in\mathbb R^d:
\big||y-x|-r\big|\le\sqrt d\,2^{-n}
\right\}.
\]
It follows from above observation and the Fubini theorem that
\begin{equation}\label{eq:A-annulus}
	 A_{n,j}f(x)\le\frac{1}{|B(0,r)|}\int_{\Gamma_{n,j}(x)}f(y)\,dy .
\end{equation}
We have the elementary annular measure estimate
\begin{equation}\label{eq:annular-measure}
	|\Gamma_{n,j}(x)|
	\le C_d r^{d-1}2^{-n}.
\end{equation}
Indeed, set $t=\sqrt d\,2^{-n}$. Then
\[
|\Gamma_{n,j}(x)|
=
\omega_d\big[(r+t)^d-(r-t)_+^d\big],
\]
where $r_+=\max\{r,0\}$ and $\omega_d$ is the volume of the unit ball.
If $t\le r/2$, the mean value theorem gives
\[
(r+t)^d-(r-t)^d\le C_d r^{d-1}t.
\]
If $t>r/2$, then $t\le\sqrt d\,r$ because $2^{-n}\le r$, and hence
\[
(r+t)^d\le C_dr^d\le C_dr^{d-1}t.
\]
This proves \eqref{eq:annular-measure}. Consequently,
\begin{equation}\label{eq:relative-annular-measure}
	\frac{|\Gamma_{n,j}(x)|}{|B(0,r)|}
	\le C_d2^{-j}.
\end{equation}
By the operator-valued H\"older inequality to
\eqref{eq:A-annulus}-Lemma \ref{holderinequality} and using \eqref{eq:relative-annular-measure}, we  deduce that
\begin{align*}
	A_{n,j}f(x)
	&\le
	\left(
	\frac{|\Gamma_{n,j}(x)|}{|B(0,r)|}
	\right)^{1/s'}
	\left(
	\frac1{|B(0,r)|}
	\int_{\Gamma_{n,j}(x)}f(y)^s\,dy
	\right)^{1/s}\\
	&\le
	C^{1/s'}_{d}2^{-j/s'}
	\left(M_{n-j-c_d}(f^s)(x)\right)^{1/s},
\end{align*}
where $c_d$ is a fixed integer chosen so that
\[
\Gamma_{n,j}(x)\subset B(x,2^{c_d}r).
\]
Therefore, we obtain by Remark \ref{rk:MaxFunct} that
$$	\|
{\sup_{n\in\Z}}^+(I-\mathsf{E}_n)M_{n-j}f
\|_{p}
\le\|{\sup_{n\in\Z}}^+	A_{n,j}f
\|_{p}\leq C^{\frac{1}{s'}}_{d}\,2^{-\frac{j}{s'}}\|{\sup_{k\in\Z}}^+(M_k f^s)^{\frac{1}{s}}\|_{p}.$$
This proves (\ref{XZmain6}).

We now turn to the proof of (\ref{XZmain5}). Observe that
$$R_n^{(2)}f=-\sum_{j\ge1}v_{n+j}\mathsf{E}_n(M_{n+j}-I)f.$$
Then it is enough to show for every $j\ge1$ and every positive $f$,
\begin{align}\label{2}
	\|
{\sup_{n\in\Z}}^+\mathsf{E}_n(M_{n+j}-I)f\|_{p}
	\le C^{\frac{1}{s'}}_{d}\,2^{-\frac{j}{s'}}\|{\sup_{k\in\Z}}^+(M_k f^s)^{\frac{1}{s}}\|_{p}.
\end{align}
To see this, fix $n,j$ and put
\[
\delta=2^{-n},
\qquad
\rho=2^{-(n+j)}.
\]
For $x\in Q=Q_n(x)$, the Fubini theorem gives
$$
\mathsf{E}_nM_{n+j}f(x)
=
\frac1{|Q|}
\int_{\mathbb R^d}\theta_Q(y)f(y)\,dy ,
$$
where $\theta_Q(y)$ is given by
\[
\theta_Q(y)
=
\frac{|Q\cap B(y,\rho)|}{|B(0,\rho)|}.
\]
Thus one can write
\[
\mathsf{E}_n(M_{n+j}-I)f(x)
=
\frac1{|Q|}
\int_{\mathbb R^d}
\big(\theta_Q(y)-\chi_Q(y)\big)f(y)\,dy .
\]
Note that the function $\theta_Q-\chi_Q$ vanishes away from the $\rho$-neighborhood
of $\partial Q$.  More precisely, let
\[
U_{n,j}(Q)
=
\left\{
y\in\mathbb R^d:
\operatorname{dist}(y,\partial Q)\le\rho
\right\}.
\]
Then it is not difficult to check that for any $y\in\R^d$,
\begin{align}\label{22}
|\theta_Q(y)-\chi_Q(y)|
\le \chi_{U_{n,j}(Q)}(y).
\end{align}
Define the positive operator
$$
B_{n,j}f(x)
=
\frac1{|Q_n(x)|}
\int_{U_{n,j}(Q_n(x))}f(y)\,dy.
$$
Then it follows from (\ref{22}) that
\begin{equation}\label{eq:order-B}
	-B_{n,j}f
	\le \mathsf{E}_n(M_{n+j}-I)f
	\le B_{n,j}f .
\end{equation}
Observe that $|U_{n,j}(Q)|
\le C_d\rho\delta^{d-1}$, which gives that
\begin{equation}\label{eq:relative-cube-boundary}
	\frac{|U_{n,j}(Q)|}{|Q|}
	\le C_d\frac{\rho}{\delta}
	=C_d2^{-j}.
\end{equation}
Hence, by Lemma \ref{holderinequality} and \eqref{eq:relative-cube-boundary}, we see that
\begin{align*}
	B_{n,j}f(x)
	&\le
	C^{1/s'}_{d}2^{-j/s'}
	\left(
	\frac1{|Q|}
	\int_{U_{n,j}(Q)}f(y)^s\,dy
	\right)^{1/s}.
\end{align*}
Moreover,
\[
U_{n,j}(Q_n(x))
\subset B(x,C_d2^{-n}).
\]
After changing the scale by a fixed dimensional constant, we have
\begin{equation}\label{eq:B-pointwise}
	B_{n,j}f(x)
	\le C^{1/s'}_{d}2^{-j/s'}
	\left(M_{n-c_d}(f^s)(x)\right)^{1/s}.
\end{equation}
Therefore,  by (\ref{eq:order-B}), (\ref{eq:B-pointwise}) and Remark \ref{rk:MaxFunct}, we obtain (\ref{2}). This finishes the proof of the proposition.
\end{proof}

With Proposition \ref{cotlar1} in hand, we can present the proof of strong type $(p,p)$ estimates of $(T_N)_{N\in\Z^2_{<}}$ stated in Theorem \ref{maintheorem}.
\begin{proof}[Proof of Theorem \ref{maintheorem} (i).]
As we discussed in the proof of Proposition \ref{cotlar1}, we may assume that $f$ is positive. Now fix $1<p<\infty$ and $f\in L_p(\n)_+$.  By  Proposition \ref{cotlar1}, it suffices to estimate the two terms on the right hand side of (\ref{cotlar2}). To that end, we first use the noncommutative Doob's maximal inequality \cite{J1} to deduce that
\begin{equation}\label{eq1}
	\|{\sup_{n\in\Z}}^+\mathsf{E}_{n}(Tf)\|_{p}\leq c_{d,p}\|Tf\|_p\leq C_{d,p}\|f\|_p,
\end{equation}
where the second inequality follows from Lemma \ref{xu2023}.

It remains to estimate the second term on the  right hand side of (\ref{cotlar2}). Choose $s$ satisfying $1<s<p$, with the specific value to be determined later. By using the noncommutative Hardy-Littlewood
maximal inequalities  \cite{M}  to $f^s\in L_{p/s}(\n)$, we may find a positive operator $F\in L_{p/s}(\n)$ such that for any $k\in\Z$,
$$\|F\|_{p/s}\leq C_d\frac{p^2}{(p-s)^2}\|f^s\|_{p/s}\ \mbox{and}\ M_k f^s\leq F.$$
Consequently, by the monotone increasing property of $0\leq a\mapsto a^t$ for $0<t<1$, we see that for any $k\in\Z$,
$$(M_k f^s)^{1/s}\leq F^{1/s}\ \mbox{and}\ \|F^{1/s}\|_p=\|F\|^{1/s}_{p/s}\leq C^{1/s}_d\frac{p^{2/s}}{(p-s)^{2/s}}\|f\|_p.$$
This gives by Remark \ref{rk:MaxFunct} that
\begin{equation}\label{eq2}
\|{\sup_{k\in\Z}}^+(M_k f^s)^{\frac{1}{s}})\|_{p}\leq\|F^{1/s}\|_p\leq C^{1/s}_d\frac{p^{2/s}}{(p-s)^{2/s}}\|f\|_p.
\end{equation}

Combining with Proposition \ref{cotlar1}, (\ref{eq1}) and (\ref{eq2}), we obtain
$$\|{\sup_{N\in\Z^2_{<}}}^+T_{N}f\|_p\leq \big(C_{d,p,s}+C_{d,s}\frac{p^{2/s}}{(p-s)^{2/s}}\big)\|f\|_p.$$
By letting $s={1+p\over2}$,  
we finally have
$$\|{\sup_{N\in\Z^2_{<}}}^+T_{N}f\|_p\leq C_{d,p}\|f\|_p.$$
This completes the proof.
\end{proof}


\section{Proof of Theorem \ref{maintheorem}: weak type $(1,1)$ estimate}\label{Sec:4}
This section is devoted to the proof of endpoint estimate of $(T_N)_{N\in\Z^2_{<}}$ stated in Theorem \ref{maintheorem}.
\subsection{Some reductions}
Without loss of generality, we may assume that $f$ is positive and the sequence $(\nu_k)$ is real. Moreover,
since $\mathcal N_{c,+}$ is dense in the positive cone of $L_1(\mathcal N)$, a standard approximation argument shows that it suffices to establish the desired weak type $(1,1)$ estimate for $f\in \mathcal N_{c,+}$.

Now fix $f\in\mathcal N_{c,+}$ and $\lambda>0$. By the definition of $\Lambda_{1,\infty}(\n;\ell_\infty(\Z^2_{<}))$, it is enough to show that there exists a projection $e\in\n$ such that
\begin{align}\label{XZ4}
	\sup_{N\in\Z^2_{<}}\|eT_{N}f e\|_{\infty}\leq\lambda\ \ \mbox{and}\ \ \varphi(e^{\perp})\le C_d\frac{\| f\|_{1}}{\lambda}.
\end{align}

By translating the function $f$ (i.e., shifting the index of the dyadic filtration
by $m_{\lambda}(f)$), we may assume that
$m_{\lambda}(f)=0$, where $m_{\lambda}(f)$ is the integer provided by
Lemma~\ref{cucu}.  This translation corresponds to re-indexing the operators
$M_k$ and $\mathsf E_k$ by $k\mapsto k+m_{\lambda}(f)$, which does not affect
any of the norm estimates since the family $(T_N)_N$ is invariant under such
a simultaneous shift of the index set $\Z^2_{<}$. Now using Theorem \ref{czdecom}, we can write $f=g+b+h$. Thus it suffices to find a projection $e\in \mathcal N$ such that
\begin{align*}
	\forall N\in\Z^2_{<},\;\max\Big\{\|eT_{N}g e\|_\infty,\;\|eT_{N}b e\|_\infty,\;\|eT_{N}h e\|_\infty\Big\}\leq\lambda\ \ \text{and}
	\ \ \varphi(e^{\perp})\lesssim\frac{\|f\|_1}{\lambda},
\end{align*}
which will follow from, by setting $e=e_1\wedge e_2\wedge e_3$, the existences of three projections $e_1,e_2$ and $e_3$ such that
\begin{align}\label{XZ5}
	\sup_{N\in\Z^2_{<}}\|e_{1}T_{N}g e_{1}\|_{\infty}\leq\lambda\ \ \mbox{and}\ \ \varphi(e_{1}^{\perp})\lesssim\frac{\| f\|_{1}}{\lambda},
\end{align}
\begin{align}\label{XZ6}
	 \sup_{N\in\Z^2_{<}}\|e_{2}T_Nbe_{2}\|_{\infty}\leq\lambda\ \ \mbox{and}\ \ \varphi(e_{2}^{\perp})\lesssim\frac{\| f\|_{1}}{\lambda},
\end{align}
and
\begin{align}\label{XZ7}
	 \sup_{N\in\Z^2_{<}}\|e_{3}T_Nhe_{3}\|_{\infty}\leq\lambda\ \ \mbox{and}\ \ \varphi(e_{3}^{\perp})\lesssim\frac{\| f\|_{1}}{\lambda}.
\end{align}
\subsection{Estimate of (\ref{XZ5})}
The desired maximal weak type $(1,1)$ estimate for the good function is a consequence of Theorem \ref{maintheorem} (i).
\begin{proof}[Proof of \eqref{XZ5}.]
Observe that $g$ is positive according to Theorem \ref{czdecom} (ii) and each $T_Ng$ is selfadjoint. Applying Theorem \ref{maintheorem} (i) and Remark \ref{rk:MaxFunct}, there exists a positive operator $a\in L_{2}(\mathcal{N})$ such that for any $N\in\Z^2_{<}$,
	$$-a\leq T_{N}g\leq a\ \ \ \mbox{and}\ \ \ \|a\|_{2}\lesssim\|g\|_{2}.$$
	Set $e_{1}=\chi_{(0,\lambda]}(a)$. Then for any $N\in\Z^2_{<}$,
	$$-\lambda\leq-e_{1}ae_{1}\leq e_{1}T_{N}ge_{1}\leq e_{1}ae_{1}\leq\lambda.$$
Furthermore, by the Chebyshev and H\"{o}lder inequalities,
	 $$\varphi(e_{1}^{\perp})=\varphi(\chi_{(\lambda,\infty)}(a))\leq
	 \frac{\|a\|^{2}_{2}}{\lambda^{2}}\lesssim\frac{\|g\|^{2}_{2}}{\lambda^{2}}\leq \frac{\|g\|_1\|g\|_{\infty}}{\lambda^{2}}\lesssim\frac{\|f\|_{1}}{\lambda},$$
	where the last inequality follows from  Theorem \ref{czdecom} (ii). This, by Remark \ref{weakm4}, we obatin the desired estimate for $(T_{N}g)_{N\in\Z^2_{<}}$.
\end{proof}
\subsection{Estimate of (\ref{XZ6})}
Using the projection $\zeta$ constructed in (\ref{czd9}), we decompose $T_{N}b$ in the following way $$T_{N}b= \zeta^\perp T_{N}b \zeta^\perp + \zeta \hskip1pt T_{N}b\zeta^\perp
+ \zeta^\perp T_{N}b\zeta + \zeta \hskip1pt T_{N}b
\zeta.$$
Then taking $e_{2,1}=\zeta$, we are reduced to finding a projection $e_{2,2}$ such that
\begin{align}\label{bd}
	\sup_{N\in\Z^2_{<}}\|e_{2,2}\zeta T_{N}b\zeta e_{2,2}\|_{\infty}\leq\lambda\ \ \mbox{and}\ \ \varphi(e_{2,2}^{\perp})\lesssim\frac{\| f\|_{1}}{\lambda}.
\end{align}
Indeed, taking $e_2=e_{2,1}\wedge e_{2,2}$, we obtain for any $N\in\Z^2_{<}$
$$\|e_{2}T_{N}be_{2}\|_\infty\leq \|e_{2,2}\zeta T_{N}b\zeta e_{2,2}\|_{\infty}\leq\lambda.$$
Together with (\ref{bd}) and Theorem \ref{czdecom} (i), we get
$$\varphi(e_{2}^{\perp})\leq\varphi(e^{\perp}_{2,1})+\varphi(e^{\perp}_{2,2})\lesssim\frac{\| f\|_{1}}{\lambda}.$$
Thus we get estimate (\ref{XZ6}).

Now we are at a position to show \eqref{bd}.
\begin{proof}[Proof of \eqref{bd}.]
	Recall that $m_{\lambda}(f)=0$, we have $p_n=0$ for all $n\leq 0$. By Theorem \ref{czdecom} (iii),  $b=\sum_{n=1}^{\infty}b_{n}$ where $b_{n}=p_n(f-f_n)p_n$ as in (\ref{czd6}).
	We first claim that for  $k\geq n$,
	\begin{eqnarray}\label{diff}
		\zeta(M_k-\mathsf{E}_{k}) \hskip-1pt b_{n}\zeta=0.
	\end{eqnarray}
	Indeed, fix one $x\in\mathbb R^d$. For $k\ge n$, the ball $B_k$ has radius $2^{-k}\le 2^{-n}$.
	Since $x\in Q_{x,n}$,
	the distance from $x$ to the boundary of $5Q_{x,n}$ in any direction is at least $2\cdot2^{-n}\ge2\cdot2^{-k}>2^{-k}$.
	Hence $x+B_k\subset 5Q_{x,n}$.
	By the cancellation property in Theorem~\ref{czdecom} (iii), we have $\zeta(x)b_n(y)\zeta(x)=0$ for every $y\in 5Q_{x,n}$.
	Consequently,
	$$
	\zeta(x)M_k b_n(x)\zeta(x)
	= \frac{1}{|B_k|}\int_{x+B_k} \zeta(x)b_n(y)\zeta(x)\,dy = 0.
	$$
	Similarly, for $k\ge n$, the dyadic cube $Q_{x,k}$ satisfies $Q_{x,k}\subset Q_{x,n}\subset 5Q_{x,n}$,
	and therefore
	$$
	\zeta(x)\mathsf{E}_k b_n(x)\zeta(x)
	= \frac{1}{|Q_{x,k}|}\int_{Q_{x,k}} \zeta(x)b_n(y)\zeta(x)\,dy = 0.
	$$
	This establishes the claim (\ref{diff}).
	On the other hand, for $k< n$, by applying the property of conditional expectation, one has $\mathsf{E}_k b_{n}=\mathsf{E}_k\mathsf{E}_nb_{n}=0$.

	Taking these observations into consideration, we deduce that for any $x\in\mathbb R^d$,
	\begin{align*}
		\zeta(x)T_{N}b(x)\zeta(x)
		 =\zeta(x)\sum_{n=1}^{\infty}\sum_{{\begin{subarray}{c}
				k:N_1\leq k\leq N_2 \\ k<n
			\end{subarray}}}\nu_kM_{k}b_{n}(x)\zeta(x).
	\end{align*}
	Furthermore, by the cancellation property of $b_{n}$ stated in Theorem \ref{czdecom} (iii), it is easy to verify that
	\begin{align}\label{bad3422237289}
		 M_kb_{n}(x)=\frac{1}{|B_{k}|}\int_{x+B_{k}}b_{n}(y)dy=\frac{1}{|B_{k}|}\sum_{{\begin{subarray}{c}
					Q\in\Q_{n} \\ (x+\partial B_{k})\cap Q\neq \emptyset
		\end{subarray}}}\int_{(x+B_{k})\cap Q}b_{n}(y)dy,
	\end{align}
	where $\partial B$ denotes the boundary of the ball $B$.

	We now handle the operator-valued integrand $b_n(y)$, which is not necessarily positive. For each $x\in\R^d$, $n\in\N$, $k<n$, and $Q\in\Q_n$ with $(x+\partial B_k)\cap Q\neq\emptyset$, define the selfadjoint operator
	$$
	A_{n,k,Q}(x)=\frac{1}{|B_k|}\int_{(x+B_k)\cap Q}b_n(y)\,dy\in\M.
	$$
	Recall that $|a|=(a^*a)^{1/2}$. Define
	\begin{equation}\label{F1def}
	F_1(x)=\sum_{n=1}^{\infty}\sum_{k<n}\;
		\Bigl|\nu_k\sum_{\substack{Q\in\Q_n\\ (x+\partial B_k)\cap Q\neq\emptyset}}
A_{n,k,Q}(x)\Bigr|.
	\end{equation}
	Since $-|a|\le a\le |a|$ for any selfadjoint $a\in\M$, and $F_1(x)$ is positive by construction, we obtain the valid operator inequality
	\begin{align}\label{F1}
		-\zeta F_{1}\zeta\leq \zeta T_{N}b\zeta\leq \zeta F_{1}\zeta.
	\end{align}
To complete the proof of \eqref{bd}, it is enough to show
	\begin{align}\label{F121}
		\|F_{1}\|_{1}\lesssim\|f\|_{1}.
	\end{align}
Indeed, with (\ref{F121}) in hand, set $e_{2,2}=\chi_{(0,\lambda]}(\zeta F_{1}\zeta)$. Then \eqref{F1} implies for any $N\in\Z^2_{<}$
	\begin{align*}
		-\lambda\leq-e_{2,2}\zeta F_{1}\zeta e_{2,2}\leq e_{2,2}\zeta T_{N}b\zeta e_{2,2}\leq e_{2,2}\zeta F_{1}\zeta e_{2,2}\leq\lambda;
	\end{align*}
	moreover, by the Chebyshev inequality and (\ref{F121}), we find
	$$ \varphi(e_{2,2}^{\perp})\leq\frac{\|\zeta F_{1}\zeta\|_{1}}{\lambda}\leq\frac{\| F_{1}\|_{1}}{\lambda}\lesssim\frac{\| f\|_{1}}{\lambda}.$$
	This gives the desired estimate (\ref{bd}) thanks to Remark \ref{weakm4}.

	It remains to show (\ref{F121}). 	For $k<n$, denote
	 $$M_{k,n}b_n(x)=\frac{1}{|B_{k}|}\sum_{{\begin{subarray}{c}
				Q\in\Q_{n} \\ (x+\partial B_{k})\cap Q\neq \emptyset
	\end{subarray}}}\int_{(x+B_{k})\cap Q}b_{n}(y)dy.$$
	Then by definition,
	$$M_{k,n}b_n(x)=\sum_{\substack{Q\in\Q_n\\ (x+\partial B_k)\cap Q\neq\emptyset}}
	A_{n,k,Q}(x).$$	
On the other hand, 	by \cite[Lemma 3.8]{HX}, one has
	\begin{align*}
		\|M_{k,n}b_n\|_1\lesssim2^{k-n}\|b_n\|_1.
	\end{align*}
	
Now by the definition of $F_1$ in (\ref{F1def}), the identity $\||a|\|_1=\|a\|_1$ for $a\in L_1(\M)$, $\|\nu\|_\infty\leq1$ and the triangle inequality for the $L_1$-norm, we obtain
	 $$\|F_1\|_1\lesssim\sum_{n=1}^{\infty}\sum_{k:k<n}2^{k-n}\|b_n\|_1\leq\sum_{n=1}^{\infty} \|b_n\|_1\lesssim\|f\|_1,$$
	which finishes the proof.
\end{proof}
\subsection{Estimate of (\ref{XZ7})}
Let us now turn to the last term $h$. By taking $e_{3,1}=\zeta$ and
using the same argument as in estimating $T_{N}b$, we are reduced to finding a  projection $e_{3,2}\in\mathcal{N}$ such that
\begin{align}\label{bd11}
		\sup_{N\in\Z^2_{<}}\|e_{3,2}\zeta T_{N}h\zeta e_{3,2}\|_{\infty}\leq\lambda\ \ \mbox{and}\ \ \varphi(e_{3,2}^{\perp})\lesssim\frac{\| f\|_{1}}{\lambda}.
\end{align}
\begin{proof}[Proof of \eqref{bd11}.]
	By noting that $m_{\lambda}(f)=0$, we may write $h$ as $h=\sum_{n=1}^{\infty}h_{n}$ with $h_{n}=p_n (f-f_{n}) q_n+q_{n}(f-f_{n})p_n$ as in (\ref{czd66}).
Moreover, by Theorem \ref{czdecom} (iv) and applying an argument similar to that applied to the term  $b$ (see \eqref{diff}), we can deduce that $\zeta(M_k-\mathsf{E}_{k}) \hskip-1pt h_{n}\zeta=0$ whenever $k\geq n$; and for $k< n$, $\mathsf{E}_k h_{n}=\mathsf{E}_k\mathsf{E}_nh_{n}=0$.
	Consequently, for $x\in\mathbb R^d$,
	\begin{align*}
		\zeta(x)T_{N}h(x)\zeta(x)
		& =\zeta(x)\sum_{n=1}^{\infty}\sum_{{\begin{subarray}{c}
					k:N_1\leq k\leq N_2 \\ k<n
		\end{subarray}}}\nu_kM_{k}h_{n}(x)\zeta(x).
	\end{align*}
	It then follows from the cancellation property stated in Theorem \ref{czdecom} (iv) that
	\begin{align}\label{termh1}
		 M_kh_{n}(x)=\frac{1}{|B_{k}|}\int_{x+B_{k}}h_{n}(y)dy=\frac{1}{|B_{k}|}\sum_{{\begin{subarray}{c}
					Q\in\Q_{n} \\ (x+\partial B_{k})\cap Q\neq \emptyset
		\end{subarray}}}\int_{(x+B_{k})\cap Q}h_{n}(y)dy.
	\end{align}
	As in the $b$ estimate, we handle the non-positive integrand via the absolute value in $\M$. For each $x$, $n$, $k<n$, and $Q$, define
	$$
	B_{n,k,Q}(x)=\frac{1}{|B_k|}\int_{(x+B_k)\cap Q}h_n(y)\,dy\in\M,
	$$
	and set
	$$
	F_2(x)=\sum_{n=1}^{\infty}\sum_{k<n}\;
	\Bigr|\nu_k\sum_{\substack{Q\in\Q_n\\ (x+\partial B_k)\cap Q\neq\emptyset}}
	B_{n,k,Q}(x)\Bigr|.
	$$
	This gives that for any $N\in\Z^2_{<}$,
	\begin{align}\label{F2}
	-\zeta F_{2}\zeta\leq \zeta T_{N}h\zeta\leq \zeta F_{2}\zeta.
\end{align}
	In the following, we show that
	\begin{align}\label{F1211}
		\|F_{2}\|_{1}\lesssim\|f\|_{1}.
	\end{align}
	Observe that
	\begin{align}\label{termh}
		h_{n}=p_n f q_n+q_{n}fp_n.
	\end{align}
	Indeed, by the definition of $p_{n}$-(\ref{czd1}), $p_{n}=q_{n-1}-q_{n}\leq q_{n-1}$. Hence, by Lemma \ref{cucu} (i),
	$$p_n f_{n} q_n=p_nq_{n-1} f_{n} q_{n-1}q_n=p_nq_nq_{n-1} f_{n} q_{n-1}=0.$$
	The same reasoning gives that $q_nf_np_n=0$. This proves (\ref{termh}).

  Now we set
  \begin{eqnarray*}
  	\|G_{1,n}\|_{1}& \triangleq &\sum_{k:k<n}\int_{\R^{d}}\Big\|\frac{1}{|B_{k}|}\sum_{{\begin{subarray}{c}
  				Q\in\Q_{n} \\ (x+\partial B_{k})\cap Q\neq \emptyset
  	\end{subarray}}}\int_{(x+B_{k})\cap Q}(p_n f q_n)(y)dy\Big\|_{L_{1}(\M)}dx,\\
  	\|G_{2,n}\|_{1}& \triangleq &\sum_{k:k<n}\int_{\R^{d}}\Big\|\frac{1}{|B_{k}|}\sum_{{\begin{subarray}{c}
  				Q\in\Q_{n} \\ (x+\partial B_{k})\cap Q\neq \emptyset
  	\end{subarray}}}\int_{(x+B_{k})\cap Q}(q_{n}fp_n)(y)dy\Big\|_{L_{1}(\M)}dx.
  \end{eqnarray*}
  Then by observations (\ref{termh}), (\ref{termh1}) and the Minkowski inequality, we conclude that to obtain  (\ref{F1211}), it suffices to show
  \begin{align}\label{bd1111}  	 \max\Big\{\sum_{n=1}^{\infty}\|G_{1,n}\|_{1},\sum_{n=1}^{\infty}\|G_{2,n}\|_{1}\Big\}\lesssim\|f\|_1.
  \end{align}
  By symmetric, we just consider $\|G_{1,n}\|_{1}$. We claim that
  \begin{align}\label{bd111}
   \|G_{1,n}\|_{1}\lesssim\Big(\lambda\varphi(p_{n})\Big)^{\frac{1}{2}}\Big(\varphi(fp_{n})\Big)^{\frac{1}{2}}.
  \end{align}

To see this, note that for a given point $y$, there exists a unique $Q\in\Q_n$ containing $y$ such that $p_n(y)=p_Q$ and $q_n(y)=q_Q$. Then by the H\"older inequality in the noncommutative  Hilbert-valued $L_p$-spaces (see e.g. \cite[Proposition 1.2]{M}), we have
\begin{align*}\label{diff1}
		&\quad\Big\|\int_{(x+B_k)\cap Q}p_Q f(y) q_Qdy\Big\|_{L_{1}(\M)}\\
		& \leq\Big\|\Big(\int_{(x+B_k)\cap Q}p_Qf(x)p_Qdx\Big)^{\frac{1}{2}}\Big\|_{L_{1}(\M)}
		\Big\|\Big(\int_{(x+B_k)\cap Q}q_Qf(x)q_Qdx\Big)^{\frac{1}{2}}\Big\|_{L_{\infty}(\M)}\\& \leq\Big\|\Big(\int_{ Q}p_Qf(x)p_Qdx\Big)^{\frac{1}{2}}\Big\|_{L_{1}(\M)}
		 \Big\|\Big(\int_{Q}q_Qf(x)q_Qdx\Big)^{\frac{1}{2}}\Big\|_{L_{\infty}(\M)}\\
		& =\Big\|\int_{ Q}p_Qf(x)p_Qdx\Big\|^{\frac{1}{2}}_{L_{\frac{1}{2}}(\M)}
		 \Big\|\int_{Q}q_Qf(x)q_Qdx\Big\|^{\frac{1}{2}}_{L_{\infty}(\M)}.
\end{align*}
Applying the H\"{o}lder inequality to the first term and Lemma \ref{cucu} (ii) to the second term above, we deduce that
\begin{align*}
	\hskip1pt \Big\|\int_{(x+B_k)\cap Q}p_Q f(y) q_Qdy\Big\|_{L_{1}(\M)}
	\leq&\ \|p_{Q}\|^{\frac{1}{2}}_{L_{1}(\M)}
	 \Big\|\int_{Q}p_Qf(x)p_Qdx\Big\|^{\frac{1}{2}}_{L_{1}(\M)}(\lambda|Q|)^{\frac{1}{2}}\\
	=&\ \left[\varphi(fp_{Q}\chi_{Q})\tau(p_{Q})\lambda|Q|\right]^{\frac{1}{2}}.
\end{align*}
Putting above estimate into the definition of $G_{1,n}$, using the Fubini theorem and $|B_k|\approx 2^{-kd}$, we find
\begin{align*}
\hskip1pt \|G_{1,n}\|_{1}
\lesssim&\ \hskip1pt \sum_{k:k<n}2^{kd}\int_{\R^{d}}\sum_{{\begin{subarray}{c}Q\in\Q_{n} \\ (x+\partial B_{k})\cap Q\neq \emptyset\end{subarray}}}(\varphi(fp_{Q}\chi_{Q})\tau(p_{Q})\lambda|Q|)^{\frac{1}{2}}dx\\
=&\ \sum_{k:k<n}2^{kd}\sum_{Q\in\Q_{n}}
(\varphi(fp_{Q}\chi_{Q})\tau(p_{Q})\lambda|Q|)^{\frac{1}{2}}\int_{\R^{d}}\chi_{(x+\partial B_{k})\cap Q\neq \emptyset}dx.
\end{align*}
Observe that
$$\int_{\R^{d}}\chi_{(x+\partial B_{k})\cap Q\neq \emptyset}dx\lesssim2^{-k(d-1)}2^{-n}.$$
Then  the Cauchy-Schwarz inequality gives that
\begin{align*}
\|G_{1,n}\|_{1}
\lesssim&\ \hskip1pt \sum_{k:k<n}\sum_{Q\in\Q_{n}}2^{k-n}(\varphi(fp_{Q}\chi_{Q})\tau(p_{Q})\lambda|Q|)^{\frac{1}{2}}\\
\lesssim&\ \sum_{Q\in\Q_{n}}(\varphi(fp_{Q}\chi_{Q})\tau(p_{Q})\lambda|Q|)^{\frac{1}{2}}\\
\leq&\ \Big(\sum_{Q\in\Q_{n}}\tau(p_{Q})
|Q|\lambda\Big)^{\frac{1}{2}}\Big(\sum_{
Q\in\Q_{n}}\varphi(fp_{Q} \chi_{Q})\Big)^{\frac{1}{2}}\\
=&\ \Big(\lambda\varphi(p_{n})\Big)^{\frac{1}{2}}\Big(\varphi(fp_{n})\Big)^{\frac{1}{2}}.
\end{align*}
This proves the claim (\ref{bd111}).

Now using (\ref{bd111}) and the Cauchy-Schwarz inequality once more, we have
\begin{align*}
 \sum_{n=1}^{\infty}\|G_{1,n}\|_1
  & \lesssim\Big(\sum_{n=1}^{\infty}\lambda\varphi(p_{n})\Big)^{\frac{1}{2}}
  \Big(\sum_{n=1}^{\infty}\varphi(fp_{n})\Big)^{\frac{1}{2}}\\
  & =  \Big(\lambda\varphi(1-q)\Big)^{\frac{1}{2}}
 \Big(\varphi(f(1-q))\Big)^{\frac{1}{2}}\lesssim\|f\|_{1},
\end{align*}
 where the last inequality follows from Lemma \ref{cucu} (iii).
This completes the proof of (\ref{bd1111}).
 			
Finally,  set $e_{3,2}=\chi_{(0,\lambda]}(\zeta F_{2}\zeta)$. Then by (\ref{F2}) and (\ref{bd1111}), we get for any $N\in\Z^2_{<}$,
 $$\|e_{3,2}\zeta T_{N}h\zeta e_{3,2}\|_{\infty}\leq\lambda\ \ \mbox{and}\ \
\varphi(e_{3,2}^{\perp})\leq\frac{\|F_{2}\|_{1}}{\lambda}\lesssim\frac{\| f\|_{1}}{\lambda}.$$
This completes the proof of (\ref{bd11}).
 \end{proof}

\subsection{Conclusion}
Combining the results obtained so far in Subsection 4.1-4.4, we obtain the maximal weak type $(1,1)$ estimate of $(T_N)_N$ announced in Theorem \ref{maintheorem}. 

\medskip

\section{Proof of Theorem \ref{maintheorem}: b.a.u. convergence}\label{Sec:convergence}

To complete the proof of Theorem \ref{maintheorem}, we provide the argument for b.a.u.\
convergence. Recall that the index set $\Z^2_{<}$ forms a net. More precisely, for $N=(N_1,N_2)$ and $N'=(N_1',N_2')$, we denote
$N\preceq N'$ if $N_1'\le N_1$ and $N_2\le N_2'$. Hence, the usual sequential reasoning must be adapted to this directed setting.
\begin{lem}\label{denseset}
	Define
	$$\mathcal D = \left\{
	\sum^m_{i=1}a_i \otimes \chi_{Q_i}
	\;:\;a_i\in \mathcal{S_{\M}},\;Q_i\in\mathcal Q,\;m\in\N
	\right\}.$$
	Then for any $h\in\mathcal D$, 
	\begin{equation*}
		\|T_{N}h - T_{N'}h\|_\infty\rightarrow0\ \mbox{as}\ N_1\rightarrow-\infty,N_2\rightarrow\infty
	\end{equation*}
where $N=(N_1,N_2)$ and $N'=(N_1',N_2')$ with $N\preceq N'$.
\end{lem}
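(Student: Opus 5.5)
By linearity of $T_N$ in $h$ and the triangle inequality in $\n$, it suffices to treat a single elementary tensor $h=a\otimes\chi_{Q_0}$ with $a\in\mathcal S_\M$ and $Q_0\in\Q_{k_0}$. Then $(M_k-\mathsf E_k)h=a\otimes(M_k\chi_{Q_0}-\mathsf E_k\chi_{Q_0})$, so the operator part factors out. For $N\preceq N'$ we have
\[
\|T_Nh-T_{N'}h\|_\infty\le \|a\|_\infty\Big\|\sum_{N_1'\le k<N_1}\nu_k\phi_k+\sum_{N_2<k\le N_2'}\nu_k\phi_k\Big\|_{L_\infty(\R^d)},\qquad \phi_k=M_k\chi_{Q_0}-\mathsf E_k\chi_{Q_0}.
\]
The lemma therefore reduces to a scalar statement: both tails of $\sum_k\nu_k\phi_k$ are uniformly small, uniformly in the bounded coefficients $(\nu_k)$.

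\textbf{Lower tail ($k\to-\infty$).} For $k\le k_0$ we have $\mathsf E_k\chi_{Q_0}=2^{(k-k_0)d}\chi_{\widetilde Q}$, where $\widetilde Q\in\Q_k$ is the ancestor of $Q_0$. We also have $0\le M_k\chi_{Q_0}\le |Q_0|/|B_k|\approx 2^{(k-k_0)d}$. Hence $\|\phi_k\|_\infty\lesssim 2^{(k-k_0)d}$, which is summable as $k\to-\infty$. This gives $\sup_{N_1'\le N_1}\|\sum_{N_1'\le k<N_1}\nu_k\phi_k\|_\infty\lesssim 2^{(N_1-k_0)d}\to0$.

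\textbf{Upper tail ($k\to+\infty$).} For $k\ge k_0$ we have $\mathsf E_k\chi_{Q_0}=\chi_{Q_0}$, so $\phi_k=M_k\chi_{Q_0}-\chi_{Q_0}$. This function is supported in the $2^{-k}$-neighbourhood $U_k$ of $\partial Q_0$ and is bounded by $1$. At a point $x$ with $\operatorname{dist}(x,\partial Q_0)=t>0$, only the indices with $2^{-k}>t$ contribute. Consequently the upper tail vanishes identically outside $U_{N_2}$, whose measure is $\lesssim 2^{-N_2}\ell(Q_0)^{d-1}\to0$.

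\textbf{Main obstacle.} The difficulty is obtaining a uniform sup bound inside $U_{N_2}$. Near a face of $Q_0$, at distance $t\ll 2^{-k}$, the quantity $\phi_k(x)$ is close to $\mp\tfrac12$. Consequently $\sum_{N_2<k\le \log_2(1/t)}\nu_k\phi_k(x)$ can be of size $\log_2(1/t)-N_2$ when the $\nu_k$ have constant sign. So a literal $L_\infty(\R^d)$ bound seems unavailable, and I expect this step to fail as worded. I would first look for cancellation between consecutive $\phi_k$ (for instance, pairing $k$ and $k+1$). If none is found, I would prove the form actually used in Section~\ref{Sec:convergence} instead: for each $\delta>0$, take the projection $e_\delta=\chi_{\R^d\setminus U_{m}}\otimes\1_\M$ with $m$ chosen so that $\varphi(e_\delta^\perp)<\delta$. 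Then $e_\delta(T_Nh-T_{N'}h)e_\delta$ involves no upper-tail terms once $N_2\ge m$, while the lower tail is uniformly small by the geometric bound above. This gives $\|e_\delta(T_Nh-T_{N'}h)e_\delta\|_\infty\to0$, which is exactly what the b.a.u.\ argument needs when combined with the maximal weak type $(1,1)$ bound of Theorem~\ref{maintheorem}~(ii).
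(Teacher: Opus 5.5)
Your diagnosis is correct, and you can state it as a definite conclusion: the lemma is false as stated. The paper's proof breaks exactly at the upper tail. The paper asserts $\|(M_k-\mathsf E_k)h\|_\infty\le C_h\min\{2^{kd},2^{-k}\}$. The $2^{kd}$ part is your lower-tail argument and is fine, but the bound $C_h2^{-k}$ for large $k$ is wrong.

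Take $h=a\otimes\chi_{Q_0}$ with $Q_0\in\Q_{k_0}$ and $k\ge k_0+2$. Then $(M_k-\mathsf E_k)h=a\otimes(M_k\chi_{Q_0}-\chi_{Q_0})$. At points just outside the centre of a face of $Q_0$, the average $M_k\chi_{Q_0}$ tends to $\tfrac12$, so $\|(M_k-\mathsf E_k)h\|_\infty\ge\tfrac12\|a\|_\infty$. With $\nu\equiv1$ and $N'=(N_1,N_2+1)\succeq N$, this already gives $\|T_{N'}h-T_Nh\|_\infty\ge\tfrac12\|a\|_\infty$ for every $N$ with $N_2$ large. So no pairing or cancellation between consecutive $\phi_k$ can rescue the statement, and you can drop that step. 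What genuinely decays like $2^{-k}$ is the measure of $\operatorname{supp}\phi_k$ (hence $\|(M_k-\mathsf E_k)h\|_1$), which is exactly what your cut-off exploits.

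Your replacement statement is the right substitute: Cauchy in $L_\infty$ after compressing by a projection of small trace. It is all that the proof of Theorem~\ref{maintheorem}~(iii) needs. For each $g_n$, pick such a projection $e_n'$ with $\varphi(1-e_n')<\varepsilon\,2^{-n-2}$, and replace $e=\bigwedge_n e_n$ by $\bigwedge_n(e_n\wedge e_n')$. The three-term estimate then goes through unchanged. For $1<p<\infty$, the weak-type input there comes from part (i) via Chebyshev, not from (ii).

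One correction is needed. Your $e_\delta=\chi_{\R^d\setminus U_m}\otimes\1_\M$ has $\varphi(e_\delta^\perp)=|U_m|\,\tau(\1_\M)$, which is infinite when $\tau$ is not finite, so the required $m$ may not exist. Take instead $e_\delta=\1_\n-\chi_{U_m}\otimes s$. Here $U_m$ is the union of the $2^{-m}$-neighbourhoods of the $\partial Q_i$, and $s\in\M$ is a finite-trace projection with $sa_is=a_i$ for all $i$; it exists because $a_i\in\mathcal S_\M$. Then:
\begin{itemize}
\item $e_\delta(x)a_ie_\delta(x)=(\1_\M-s)a_i(\1_\M-s)=0$ for $x\in U_m$;
\item off $U_m$, the upper-tail terms vanish once $N_2\ge m$ and $N_2$ exceeds the generations of all the $Q_i$;
\item $\varphi(e_\delta^\perp)=|U_m|\,\tau(s)\to0$ as $m\to\infty$.
\end{itemize}
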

\begin{proof}
	For $h=\sum_{i=1}^m a_i\otimes \chi_{Q_i}\in\mathcal D$, we have for every $k\in\mathbb Z$,
	$$
	M_k h(x)=\sum_{i=1}^m a_i\,\frac{|(x+B_k)\cap Q_i|}{|B_k|},
	\qquad
	\mathsf E_k h(x)=\sum_{i=1}^m a_i\,\frac{|Q_{x,k}\cap Q_i|}{|Q_{x,k}|},
	$$
	where $Q_{x,k}$ is the unique dyadic cube of side-length $2^{-k}$ containing $x$.
	
	We claim that there exist constants $C_h>0$ and $K_0\in\mathbb Z$ such that
	\begin{equation}\label{xu-zhang112}
		\|(M_k-\mathsf E_k)h\|_\infty \le C_h \min\{2^{kd},\,2^{-k}\}
		\quad\text{for all } k\in\mathbb Z.
	\end{equation}
Indeed, for $k\to -\infty$, each $Q_i$ is contained in at most $C_d$ dyadic cubes of side-length $2^{-k}$ and in at most $C_d$ balls of radius $2^{-k}$; hence
$$
\|(M_k-\mathsf E_k)(a_i\otimes \chi_{Q_i})\|_\infty
\le \|a_i\|_{\mathcal M}\, |Q_i|\, (|B_k|^{-1}+|Q_{x,k}|^{-1})
\le C_{d,i}\|a_i\|_{\mathcal M}\,2^{kd}.
$$
On the other hand, it is not difficult to see there exists $C_h>0$ such that $\|(M_k-\mathsf E_k)h\|_\infty\le C_h 2^{-k}$ for all sufficiently large $k$. This proves (\ref{xu-zhang112}).

Consequently, for $N\preceq N'$,
\[
\begin{aligned}
	T_{N'}h-T_Nh
	&=\sum_{k=N_1'}^{N_1-1}\nu_k(M_k-\mathsf E_k)h
	+\sum_{k=N_2+1}^{N_2'}\nu_k(M_k-\mathsf E_k)h.
\end{aligned}
\]
Since $\|\nu\|_{\ell_\infty}\le1$, using (\ref{xu-zhang112}) we obtain
\[
\begin{aligned}
	\|T_{N'}h-T_Nh\|_\infty
	&\le \sum_{k=N_1'}^{N_1-1}\|(M_k-\mathsf E_k)h\|_\infty
	+\sum_{k=N_2+1}^{N_2'}\|(M_k-\mathsf E_k)h\|_\infty \\
	&\le C_h\left(\sum_{k=N_1'}^{N_1-1}2^{kd}+\sum_{k=N_2+1}^{N_2'}2^{-k}\right) \\
	&= C_h\left(2^{d(N_1-1)}-2^{d(N_1'-1)} + 2^{-N_2}-2^{-N_2'}\right) \to 0
\end{aligned}
\]
as $N_1\to -\infty$ and $N_2\to\infty$. The proof is finished.
\end{proof}

\begin{proof}[Proof of Theorem \ref{maintheorem} (iii)]
Let $1\leq p<\infty$ and $f\in L_p(\n)$. By  \cite[Proposition 1.3]{CL}, it  is enough  to show $(T_{N}f)_{N\in\Z^2_{<}}$ is a Cauchy net. More precisely, for any $\varepsilon>0$, there exists a projection $e\in\n$ such that
\begin{equation}\label{eq:tail}
	\varphi(1-e)<\varepsilon\ \mbox{and}\ \|e(T_{N}f - T_{N'}f)e\|_\infty\rightarrow0\ \mbox{as}\ N_1\rightarrow-\infty,N_2\rightarrow\infty
\end{equation}
where $N=(N_1,N_2)$ and $N'=(N_1',N_2')$ with $N\preceq N'$. Since $\mathcal D$ is dense in $L_p(\n)$, we may find $g_n\in\mathcal D$ such that $\|f-g_n\|_p^p\leq\frac{\varepsilon}{2^{n+1}n^p}$. Then using the fact that $(T_N)_N$ is of weak type $(p,p)$, there is a projection $e_n\in\n$ so that for any $N\in\Z^2_{<}$,
$$\|e_n(T_N(f-g_n))e_n\|_\infty\leq\frac{1}{n}\ \mbox{and}\ \varphi(1-e_n)\leq n^p\|f-g_n\|_p^p\leq\frac{\varepsilon}{2^{n+1}}.$$
Set $e=\wedge_ne_n$. Then
\begin{equation}\label{eq:ta}
	\varphi(1-e)<\frac{\varepsilon}{2}\ \mbox{and}\ \sup_{N\in\Z^2_{<}}\|e(T_N(f-g_n))e\|_\infty\leq\frac{1}{n}\ \forall n\geq1.
\end{equation}

On the other hand, by Lemma \ref{denseset}, for each fixed $n$, there exists $K_n>0$ such that whenever $N_1\le -K_n$ and $N_2\ge K_n$,
$$
\|T_{N'}g_n-T_Ng_n\|_\infty < \frac{1}{n}.
$$

By above estimates,
for $N\preceq N'$ with $N_1\le -K_n$ and $N_2\ge K_n$, we have
\begin{align*}
	\|e(T_{N'}f-T_Nf)e\|_\infty
	&\le \|e(T_{N'}f-T_{N'}g_n)e\|_\infty
	+\|T_{N'}g_n-T_Ng_n\|_\infty \\
	&\quad + \|e(T_Ng_n-T_Nf)e\|_\infty \\
	&\le \frac{1}{n} + \frac{1}{n} + \frac{1}{n}
	= \frac{3}{n}.
\end{align*}
Now let $N_1\rightarrow-\infty,N_2\rightarrow\infty$ firstly and later $n\rightarrow\infty$, the above inequality yields
$$\lim_{N_1\rightarrow-\infty,N_2\rightarrow\infty}\|e(T_{N}f - T_{N'}f)e\|_\infty=0.$$
This completes the proof.
\end{proof}


\end{document}